\newcommand{\R}{\mathbb{R}}
\newcommand{\vol}{\operatorname{vol}}
\newcommand{\dvol}{\mathrm{dvol}}
\newcommand{\Z}{\mathbb{Z}}
\newcommand{\C}{\mathbb{C}}
\newcommand{\im}{i}
\newcommand{\id}{\mathbb{I}}
\newcommand\be{\begin{eqnarray}}
\newcommand\ee{\end{eqnarray}}
\newtheorem{theorem}{Theorem}[section]
\newtheorem{corollary}[theorem]{Corollary}
\newtheorem{definition}[theorem]{Definition}
\newtheorem{proposition}[theorem]{Proposition}
\numberwithin{equation}{section}
\setlist[itemize]{leftmargin=1em}
\setlist[enumerate]{leftmargin=1em}
\title[Four-dimensional Riemannian geometry via 2-forms]{Four-dimensional Riemannian geometry via 2-forms}
\author[Bhoja]{Niren Bhoja}
\author[Krasnov]{Kirill Krasnov}
\email{kirill.krasnov@nottingham.ac.uk, ORCID: 0000-0003-2800-3767}
\address{School of Mathematical Sciences, University of Nottingham, Nottingham, NG7 2RD, UK}
\begin{document}

\begin{abstract} In differential geometry, geometric structures can often be encoded by differential forms satisfying algebraic and differential constraints. This is in particular the case for spinorial $G$-structures, where the defining tensors are differential forms arising as spinor bilinears and their exterior derivatives determine the intrinsic torsion. In this paper we show that, in certain situations, this can be extended beyond the setting of spinorial $G$-structures. Thus, when $\tilde{G}/G$ is a Lie group $H$, a $\tilde{G}$-structure with $\tilde{G}\supset G$ can be described in terms of a spinorial $G$-structure by allowing the defining forms to take values in an associated $H$-bundle, and converting the intrinsic torsion of the $G$-structure into an $H$-connection. We develop this idea in four dimensions, where the triple of 2-forms associated with a spinorial ${\rm SU}(2)$-structure can be encoded as a 2-form with values in the associated $H={\rm SO}(4)/{\rm SU}(2)={\rm SO}(3)$ vector bundle. This gives a description of Riemannian geometry, i.e. ${\rm SO}(4)$-structures, and leads to a unique ${\rm SO}(3)$-invariant functional of ${\rm SU}(2)$-structures whose critical points are Einstein. This perspective also provides a unified framework for Riemannian, K\"ahler and hyper-K\"ahler geometries in four dimensions. 
 \end{abstract}
   
   \subjclass{53C10,53C25,53C26}

\maketitle

%\tableofcontents

\section{Introduction}

Many geometric structures in low dimensions admit descriptions in terms of differential forms. In four dimensions, it is classical that a Riemannian metric can be recovered from a rank-three subbundle of the bundle of two-forms on which the wedge product is positive definite. In this paper we take this observation as a starting point for a systematic treatment of four-dimensional Riemannian geometry in terms of an $\mathrm{SO}(3)$ bundle-valued $\mathrm{SU}(2)$-structure. This leads to a description of the intrinsic torsion of an $\mathrm{SU}(2)$-structure in terms of an $\mathrm{SO}(3)$-connection. A notable feature of this approach is that it singles out, under natural invariance assumptions, a canonical functional on the space of such structures, whose critical points correspond to Einstein metrics. In this way, the Einstein condition emerges from the intrinsic geometry of $\mathrm{SU}(2)$-structures rather than being imposed externally. 

\subsection{G-structures, Intrinsic torsion, Action functionals.} To motivate the constructions that follow, we start with the following standard definition
\begin{definition}
A G-structure on a smooth manifold $M$ is a reduction of the principal ${\rm GL}(n,\R)$ bundle of frames on $M$ to a G-subbundle. 
\end{definition}
Most interesting geometric structures on $M$ can be rephrased in the language of G-structures. For example, a Riemannian metric on $M$ is an ${\rm O}(n)$-structure, an almost complex structure $J: TM\to TM, J^2=-\mathbb{I}$ on a manifold of dimension $2n$ is a ${\rm GL}(n,\C)$-structure, and an almost-Hermitian structure on a manifold of dimension $2n$ is a ${\rm U}(n)$-structure. 

Given a G-structure, one can construct a first-order in derivatives object transforming covariantly under diffeomorphisms called intrinsic (or reduced) torsion, see e.g. \cite{Bryant}, Section 1.3 for the definition. The intrinsic torsion vanishes for an ${\rm O}(n)$-structure (and this is one way to state the fundamental theorem of Riemannian geometry). But in general the intrinsic torsion of a G-structure is non-trivial. Its vanishing is then the most natural first-order in derivatives differential equation to be imposed on a G-structure, see \cite{Chern}. For example, the intrinsic torsion of a ${\rm U}(n)$-structure vanishes if and only if it is K\"ahler.  

For what follows, we are mainly interested in metric G-structures. 
\begin{definition} A G-structure on an n-dimensional manifold $M$ is called metric if $G\subset {\rm O}(n)$. 
\end{definition}
For metric G-structures, a natural class of diffeomoprhism-invariant functionals leading to second-order in derivatives Euler-Lagrange equations can be constructed as the integral of a linear combination of all independent torsion squared terms. More concretely, for a metric G-structure, we define the intrinsic torsion of a $G$-structure to be the torsion of the $\mathfrak{g}$-part of the Levi-Civita connection. Given that the Levi-Civita connection is torsion-free, the intrinsic torsion becomes an object in $T\in\Lambda^1 \otimes \mathfrak{g}^\perp$, where $\mathfrak{so}(n) = \mathfrak{g}\oplus\mathfrak{g}^\perp$. It decomposes into a number of irreducible representations of $G$
\[
T = \sum_k T_k,
\]
and we can consider
\begin{equation}\label{gen-action-intr}
S= \int_M (\sum_k c_k |T_k|^2) \dvol,
\end{equation}
where $c_k$ are arbitrary coefficients (that do not need to be positive), and $\dvol$ is the volume form for the metric defined by the G-structure. The number of possible terms here can be quite large. For example, for ${\rm SU}(2)$-structures in four dimensions, this number is 15, see \cite{Fowdar}, formula (66) of the paper. 

In general, it is not clear if there is a preferred member in the family of action functionals of this type. One of the motivations of this paper is to explain that it is sometimes possible to demand that a functional of this type possesses extra symmetry, and that this can severely cut the number of arbitrary constants in (\ref{gen-action-intr}). In our main example of interest in this paper, with $G={\rm SU}(2)$ in four dimensions, the construction we are to explain leads to a unique (modulo an overall coefficient) functional.  

In general, a G-structure can be encoded into a collection of $G\subset{\rm GL}(n)$-invariant tensors on $M$; in some cases this collection consists of a single tensor. The simplest examples are: an ${\rm O}(n)$-structure is encoded by a Riemannian metric; An ${\rm Sp}(n)$-structure in $2n$ dimensions is encoded by a non-degenerate 2-form, i.e. an almost-symplectic structure. A more exotic but still very well-known example is that of a $G_2$-structure in dimension seven, where the relevant tensor is a generic 3-form (of a positive type). When a functional of the type (\ref{gen-action-intr}) exists, it can be thought of as a diffeomorphism-invariant functional constructed from squares of first derivatives of these tensors. 

\subsection{Spinorial G-structures.} Certain metric G-structures can be called "spinorial". The importance of these structures lies in the fact that their encoding tensors are differential forms, and that the intrinsic torsion is completely determined by the exterior derivatives of these differential forms. 

We recall that, given an oriented spin Riemannian manifold $(M,g)$, a spin structure on $M$ is a lift of the principal ${\rm SO}(n)$ bundle of oriented orthonormal frames to a ${\rm Spin}(n)$ bundle. Let $\lambda: {\rm Spin}(n)\to {\rm SO}(n)$ be the covering map. A spinorial G-structure is then a reduction of the principal ${\rm Spin}(n)$ bundle to a $G'$-subbundle, with $G'={\rm Stab}_\psi\subset {\rm Spin}(n)$ being a subgroup that stabilises a spinor $\psi\in S$. The $G$ of the G-structure arises as $G=\lambda(G')\subset {\rm SO}(n)$. When $n$ is even, the spinor representation $S$ of ${\rm Spin}(n)$ splits into two irreducible components $S=S_+\oplus S_-$, and in this case it is natural to consider spinorial G-structures defined by a semi-spinor $\psi\in S_+$. Given that 
\be
{\rm dim}( {\rm GL}(n,\R)/G) = {\rm dim}( {\rm GL}(n,\R)/ O(n)) + {\rm dim}({\rm Spin}(n)/G),
\ee
we see that the ${\rm GL}(n,\R)$ orbit of spinorial G-structures can be viewed as the space of Riemannian metrics together with a spinor of algebraic type that has $G$ as the stabiliser. Thus, heuristically, we can say
\be
\{ \text{spinorial G-structure}\} = \{\text{metric}\} + \{\text{spinor}\}.
\ee

What is important about the spinorial G-structures is that they can be expected to be encodable into a collection of $G\subset{\rm GL}(n)$-invariant differential forms on $M$. Thus, given a spinor $\psi$ in $S$ or in $S_+$, a certain collection of differential forms arises as spinor bilinears constructed from $\psi$ with $\psi$, or possibly with $\psi$ and $\hat{\psi}$, where $\hat{\psi}$ is an appropriate ${\rm Spin}(n)$ invariant complex conjugation. Indeed, the tensor product of the spin representation $S$ with itself contains the spaces of all degree differential forms on $M$
\be
S\otimes S = \Lambda^\bullet (M).
\ee
The differential forms produced via this construction are automatically $G\subset{\rm GL}(n)$-invariant and encode the G-structure in question. Thus, as is seen from a large collection of examples, taking a sufficient number of differential forms obtained as spinor bilinears constructed from $\psi$ and $\hat{\psi}$ is sufficient to reproduce both the metric on $M$, and the spinor $\psi$ (the latter always mod $\Z_2$ sign ambiguity). 

Some of the known examples of such an encoding are as follows.
\begin{itemize}
\item {\bf $\{1\}$-structures in 3D.} In this case ${\rm Spin}(3)={\rm SU}(2)$, and the spinor is a 2-component spinor $S\sim \C^2$. Taking a unit such spinor $\langle \hat{\psi}\psi\rangle =1$, the stabiliser $G'=\{1\}$ is trivial. We can construct a real one-form $e^3\in \Lambda^1$ via $e^3(X) := \langle \hat{\psi}, X \psi\rangle$, and a complex-valued one-form $e(X): =\langle \psi, X \psi\rangle$, where a vector $X\in TM$ acts on $\psi$ by the Clifford multiplication, we get an orthonormal co-frame $e^{1,2,3}$, where $e=e^1+\im e^2$. One then recovers the metric via $ds^2 = (e^1)^2 + (e^2)^2 + (e^3)^2$. 
\item {\bf ${\rm SU}(2)$-structures in 4D.} In this case ${\rm Spin}(4)={\rm SU}(2)\times {\rm SU}(2)$. Taking a unit $\langle \hat{\psi},\psi\rangle=1$ spinor $\psi\in S_+$, its stabiliser is the other ${\rm SU}(2)$ that does not act on it. The construction of all possible differential forms returns one real $\omega$ and one complex 2-form $\Omega$ via
\begin{equation}\label{omega-Omega-psi}
\omega(X,Y)=\langle \hat{\psi}, X Y \psi\rangle, \qquad \Omega(X,Y) = \langle \psi, X Y \psi\rangle. 
\end{equation}
These objects satisfy 
\begin{equation}\label{omega-Omega}
\Omega\wedge \Omega=0, \qquad \Omega\wedge \omega=0, \qquad \frac{1}{2} \Omega\wedge \bar{\Omega} = \omega^2. 
\end{equation}
Alternatively, decomposing $\Omega := \Sigma^1+\im \Sigma^2$ and renaming $\omega:=\Sigma^3$ we get a triple of 2-forms satisfying $\Sigma^i \Sigma^j \sim \delta^{ij}, i,j=1,2,3$, where $\delta^{ij}$ is the standard Kronecker delta. Taking such a triple of 2-forms as the basic geometric data, one can recover the Riemannian metric $g_\Sigma$ (together with an orientation of $M$) via 
\be
g_\Sigma(X,Y) {\rm vol}_\Sigma = -\frac{1}{6} \epsilon^{ijk} i_X \Sigma^i \wedge i_Y \Sigma^j \wedge \Sigma^k.
\ee
Here ${\rm vol}_\Sigma$ is the volume form of the metric $g_\Sigma$, and the sign is to agree with our later conventions. The object $\epsilon^{ijk}$ is the standard completely anti-symmetric object with $\epsilon^{123}=+1$. 
\item {\bf ${\rm SU}(3)$-structures in 6D.} We now have ${\rm Spin}(6)={\rm SU}(4)$, and $S_+=\C^4$. The stabiliser of a semi-spinor $\psi\in S_+$ is ${\rm SU}(3)$. Taking a unit spinor $\langle \hat{\psi},\psi\rangle=1$, the construction of differential forms returns a real 2-form $\omega(X,Y)=\langle \hat{\psi}, X Y \psi\rangle$, and a complex 3-form $\Omega(X,Y,Z) = \langle \psi, X Y Z \psi\rangle$. This 3-form is decomposable, and also satisfies $\Omega\omega=0$, as well as $\Omega\wedge \bar{\Omega} = (4\im/3) \omega^3$. In this case it is sufficient to take as the basic geometric data the non-degenerate 2-form $\omega$, as well as a real 3-form ${\rm Re}(\Omega)$. The metric on $M$, as well as an almost complex structure in which $\Omega\in \Lambda^{3,0}$, are then recovered from these data by a procedure explained in e.g. \cite{Vezzoni}.
\item {\bf G${}_2$-structures in 7D.} In this case there are real spinors $\hat{\psi}=\psi$. Taking a real unit spinor $\psi\in S, \langle\psi,\psi\rangle=1$, the stabiliser ${\rm Stab}_\psi={\rm G}_2$. The construction of possible differential forms returns the 3-form $C(X,Y,Z) = \langle \psi, XYZ\psi\rangle$. There is also a 4-form, and a 7-form produced, but these are not independent and are determined by $C$. The 3-form $C$ produced by this construction is non-degenerate in a suitable sense. Moreover, the ${\rm GL}(7)$ orbit of 3-forms of this type is open in $\Lambda^3$. Taking a non-degenerate (and positive, in the sense that the quadratic form $g_C$ is definite) $C\in \Lambda^3$ as the basic geometric data, the metric (and orientation) of $M$ is recovered via
\be
g_C(X,Y) {\rm vol}_C = \frac{1}{6} i_X C \wedge i_Y C \wedge C.
\ee
\item {\bf ${\rm Spin}(7)$-structures in 8D.} In this dimension we again have real semi-spinors $\hat{\psi}=\psi$. Taking a unit real semi-spinor $\psi\in S_+$, the stabiliser is ${\rm Stab}_\psi = {\rm Spin}(7)$. The construction of differential forms returns a 4-form $\Phi(X,Y,Z,W) = \langle \psi, XYZW\psi\rangle$. This is a 4-form of a special algebraic type, whose ${\rm GL}(8)$ stabiliser is ${\rm Spin}(7)$. Taking this 4-form as the basic geometric data, the metric $g_\Phi$ is recovered by the formula in \cite{Spiro-Spin7}.
\end{itemize}
Other examples can be constructed, but we will not attempt a systematic enumeration. The purpose of the above list is to illustrate a common feature: in each case, we have a spinorial G-structure, which is metric $G\subset{\rm SO}(n)$, and determines a metric plus extra geometric data. In each case the G-structure is encoded into a collection of differential forms satisfying suitable algebraic constraints, and the Riemannian metric on $M$ can be recovered from these differential forms. We have not discussed the intrinsic torsion in each example, however, it is known that in each case in the above list of examples the intrinsic torsion is completely determined by the exterior derivative of the relevant differential forms. This fact makes the spinorial G-structures particularly interesting. 

\subsection{$\tilde{G}$-structures via $G\subset\tilde{G}$-structures.} The guiding idea of this paper is the following. In many cases, a spinorial 
G-structure can be encoded by differential forms, while a larger structure $\tilde{G}\supset G$ cannot. However, when the quotient  $\tilde{G}/G$
carries additional structure (for example, when it is itself a Lie group), one can describe $\tilde{G}$-geometry using G-structures whose defining forms are allowed to take values in an associated bundle. In this way, the geometry is encoded into differential forms, while the additional degrees of freedom present in $G$-structure as compared to the $\tilde{G}$-structure are absorbed into a gauge symmetry. 

The simplest example illustrating this way of thinking is the standard frame formalism for Riemannian geometry. Given a Riemannian metric on an oriented manifold $M$, choosing an oriented orthonormal (co-)frame for $T^*M$ "breaks" the structure group ${\rm SO}(n)$ completely. Thus, the collection of 1-forms comprising a frame can be referred to as $G=\{1\}$-structure. Nevertheless, we can describe $\tilde{G}={\rm SO}(n)$-structures this way, by allowing the frame to be a bundle-valued object, namely interpreting the frame as an object in $\Omega^1(M,\R^n)$, i.e. a 1-form with values in $\R^n$, or equivalently as a map $e: \R^n\to T^*_p M$. This becomes possible because $\tilde{G}/G={\rm SO}(n)$ a Lie group. The intrinsic torsion of this $\{1\}$-structure is then encoded in an ${\rm SO}(n)$-connection. This is of course well-known, but it is useful to spell out the relevant formulas to contrast with what happens in different setups below. We use the abstract index notation with indices $I,J,\ldots$ to denote $\R^n$-valued objects. We also use Greek letters as indices to refer to objects values in $T^*M$ and $TM$. We then have
\begin{proposition}\label{prop:frame}
There exists a unique ${\rm SO}(n)$ connection $w^I{}_J$, locally an object in $\Omega^1(M,\mathfrak{so}(n))$, which has the property that the total covariant derivative of the frame $e^I\in \Omega^1(M,\R^n)$ with respect to the Levi-Civita connection $\nabla$ (for the metric for which $e^I$ is the (co-)frame) and $w^I{}_J$ vanishes
\begin{equation}\label{nabla-frame}
\nabla_\mu e^I_\nu + w_\mu^I{}_J e^J_\nu=0.
\end{equation}
The connection $w$ is completely determined by the projection of this equation to $\Lambda^2$, i.e. 
\[
de^I + w^I{}_J\wedge e^J=0,
\]
and is thus completely determined by the exterior derivatives of the 1-forms $e^I$. 
\end{proposition}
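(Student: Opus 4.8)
The plan is to first solve (\ref{nabla-frame}) as an equation for an arbitrary $\mathfrak{gl}(n,\R)$-valued $1$-form, then observe that the resulting solution is automatically $\mathfrak{so}(n)$-valued, and finally show that retaining only the $\Lambda^2$-component of the equation loses no information.

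\textbf{Step 1: uniqueness and the explicit formula.} Since $e^I$ is a coframe, the matrix $(e^I_\mu)$ is pointwise invertible; denote its inverse by $(e^\mu_I)$, so $e^I_\mu e^\mu_J=\delta^I_J$. Contracting (\ref{nabla-frame}) with $e^\nu_J$ forces
\[
w_\mu{}^I{}_J=-(\nabla_\mu e^I_\nu)\,e^\nu_J ,
\]
so any $\mathfrak{gl}(n,\R)$-valued solution is unique, and conversely this expression manifestly solves (\ref{nabla-frame}). It remains to identify the algebraic type of this $w$.

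\textbf{Step 2: $w$ takes values in $\mathfrak{so}(n)$.} By definition of the coframe, $g_{\mu\nu}=\delta_{IJ}e^I_\mu e^J_\nu$. Differentiating this with the Levi-Civita connection and using $\nabla g=0$ gives $\delta_{IJ}\big((\nabla_\mu e^I_\nu)e^J_\rho+e^I_\nu(\nabla_\mu e^J_\rho)\big)=0$; contracting with $e^\nu_K e^\rho_L$ and substituting the formula from Step 1 yields $w_{\mu KL}+w_{\mu LK}=0$, where the frame index is lowered with $\delta_{IJ}$. Hence $w\in\Omega^1(M,\mathfrak{so}(n))$, which establishes the existence and uniqueness claim.

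\textbf{Step 3: the $\Lambda^2$-projection suffices.} Since $\nabla$ is torsion-free, the antisymmetrised covariant derivative of any $1$-form equals its exterior derivative, so the skew part (in $\mu,\nu$) of (\ref{nabla-frame}) is precisely $de^I+w^I{}_J\wedge e^J=0$. It is therefore enough to show that this skew part alone determines $w$ within $\Omega^1(M,\mathfrak{so}(n))$; equivalently, that the $\R$-linear map sending $w_\mu{}^I{}_J\in (\R^n)^*\otimes\mathfrak{so}(n)$ to $(w^I{}_J\wedge e^J)_{\mu\nu}\in\Lambda^2(\R^n)^*\otimes\R^n$ (trivialised by the frame at a point) is injective. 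As both spaces have dimension $n\binom{n}{2}$, injectivity is equivalent to bijectivity. Injectivity is the familiar algebraic input behind the Koszul formula: if $\delta w$ is $\mathfrak{so}(n)$-valued and $\delta w^I{}_J\wedge e^J=0$, then in frame components $\delta w_{KIL}$ is antisymmetric in its second and third indices and symmetric under exchange of its first and third, and a three-index tensor with this mixed symmetry must vanish (chain the two symmetries around a $3$-cycle). Thus the $\mathfrak{so}(n)$-valued equation $de^I+w^I{}_J\wedge e^J=0$ has at most one solution; Steps 1--2 exhibit one, so it coincides with the $w$ above, which is therefore entirely determined by the exterior derivatives $de^I$.

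\textbf{Main obstacle.} Almost everything here is routine index bookkeeping; the one genuinely substantive point — the one I would take care to state cleanly — is the injectivity in Step 3, namely that a tensor antisymmetric in one pair of its three indices and symmetric in another pair is zero. This is classical (the same mechanism that makes the Levi-Civita connection itself unique), so no real difficulty is expected, but it is the crux on which the ``$\Lambda^2$ is enough'' assertion rests, and I would isolate it as a short algebraic lemma before assembling the three steps.
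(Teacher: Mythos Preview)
Your proof is correct and complete: Steps~1--2 give the explicit solution and show it is $\mathfrak{so}(n)$-valued, and Step~3 is the standard Koszul-type injectivity argument (a three-index tensor antisymmetric in one pair and symmetric in another vanishes), which indeed is the substantive point. The paper itself does not supply a proof of this proposition; it is stated in the Introduction as a well-known fact (``This is of course well-known, but it is useful to spell out the relevant formulas\ldots'') and serves only as a model for the later ${\rm SU}(2)$ analogue, so there is no alternative argument to compare yours against.
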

By taking another Levi-Civita covariant derivative of the equation (\ref{nabla-frame}) and anti-symmetrising, one obtains a relation between the Riemann curvature and the curvature of the connection $w^I{}_J$
\begin{equation}
R_{\mu\nu\rho}{}^\sigma e^I_\sigma = - F_{\mu\nu}^I{}_J e^J_\rho, \qquad F_{\mu\nu}^I{}_J = 2 \partial_{[\mu} w_{\nu]}^I{}_J + 2 w^I_{[\mu}{}_K w^K{}_{\nu]}{}_J,
\end{equation}
which makes the frame formalism an extremely useful description of Riemannian geometry. 

Another example where this mechanism is at play is well-known in the context of K\"ahler geometry $\tilde{G}={\rm U}(n)$, as we now remind. In this case $G={\rm SU}(n)$, and the relevant differential forms are $\omega\in \Lambda^2$ and a complex decomposable n-form $\Omega\in \Lambda^n_\C$. The almost-symplectic 2-form $\omega$ is globally-defined, while $\Omega$ is only a section of an $H={\rm U}(1)$ complex line bundle. We then have
\begin{proposition} The intrinsic torsion of the $\tilde{G}={\rm U}(n)$-structure vanishes if and only if there exists a ${\rm U}(1)$ connection $a$ such that
\begin{equation}
\nabla_\mu \omega_{\rho\sigma} =0, \qquad \nabla_\mu \Omega_{\rho\sigma} + \im a_\mu  \Omega_{\rho\sigma} =0.
\end{equation}
Here $\nabla$ is the covariant derivative for the metric determined by the ${\rm U}(n)$ structure. Moreover, the ${\rm U}(1)$ connection $a$ is completely determined by the projection of these equations to $\Lambda^3$
\[
d\omega =0, \qquad d\Omega + \im a\wedge \Omega=0.
\]
\end{proposition}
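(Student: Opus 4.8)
The plan is to recast everything in terms of the $\mathfrak{so}(2n)$-valued Levi-Civita connection $1$-form $w$ (the object appearing in Proposition~\ref{prop:frame}, i.e.\ the $\mathfrak{so}$-connection form of $\nabla$ in an orthonormal frame) and to follow the action of its $\mathfrak{u}(n)$-part on the defining forms $\omega,\Omega$.

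First I would spell out the reformulation of the intrinsic torsion implicit in the paper's definition. Writing $\mathfrak{so}(2n)=\mathfrak{u}(n)\oplus\mathfrak{u}(n)^\perp$ and splitting $w=w_{\mathfrak{u}(n)}+w_{\mathfrak{u}(n)^\perp}$ accordingly, the torsion of the $\mathfrak{u}(n)$-part of $w$ equals $-\,w_{\mathfrak{u}(n)^\perp}\wedge e$ (since $w$ itself is torsion-free), while the Spencer-type map $\alpha\otimes A\mapsto\alpha\wedge(A\cdot e)$ is injective on $\Lambda^1\otimes\mathfrak{u}(n)^\perp\subset\Lambda^1\otimes\mathfrak{so}(2n)$ — a restatement of the fundamental theorem of Riemannian geometry. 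Hence the intrinsic torsion of the $\mathrm{U}(n)$-structure vanishes iff $w$ is $\mathfrak{u}(n)$-valued, i.e.\ iff $\nabla$ preserves the almost complex structure $J$; and since $\nabla g=0$ this is equivalent to $\nabla\omega=0$. This already produces the $\omega$-equation and yields one direction of the ``iff'': if some $a$ makes both displayed equations hold, the $\omega$-equation alone forces $w$ into $\mathfrak{u}(n)$ and hence the intrinsic torsion to vanish.

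For the converse, assume $\nabla\omega=0$, so $w$ is $\mathfrak{u}(n)$-valued, and look at the connection it induces on the line bundle $\Lambda^{n,0}$, of which $\Omega$ is a local section of fixed norm. Under $\mathfrak{u}(n)=\mathfrak{su}(n)\oplus\mathfrak{u}(1)$ the trace-free summand $\mathfrak{su}(n)$ acts trivially on $\Lambda^{n,0}\cong\det$ (the determinant representation), so the induced connection is multiplication by a constant multiple of $\tr w_{\mathfrak{u}(n)}$; in the trivialisation given by $\Omega$ this reads $\nabla_\mu\Omega_{\rho\sigma}=-\im a_\mu\Omega_{\rho\sigma}$ for a $1$-form $a$ that is unique, as $\Omega$ is nowhere zero. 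Moreover $a$ is real: $w_{\mathfrak{u}(n)}$ is pointwise anti-Hermitian with respect to $g+\im\omega$, and the trace of an anti-Hermitian matrix is purely imaginary (equivalently, differentiating the normalisation $\Omega\wedge\bar\Omega\propto\omega^n$ and using $\nabla\omega=0$ forces $\mathrm{Im}\,a=0$). So $a$ is a genuine $\mathrm{U}(1)$ connection, both equations hold, and the equivalence is established.

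Finally, for the ``moreover'' statement I would antisymmetrise the two covariant-derivative equations; since $\nabla$ is torsion-free this turns $\nabla$ into $d$ and gives $d\omega=0$ and $d\Omega+\im a\wedge\Omega=0$. It then suffices to check that these already determine $a$. The first does not involve $a$; in the second, write $a=a^{1,0}+a^{0,1}$ with $a^{0,1}=\overline{a^{1,0}}$ (using that $a$ is real), note that $a^{1,0}\wedge\Omega\in\Lambda^{n+1,0}=0$ so $a\wedge\Omega=a^{0,1}\wedge\Omega$, and observe that wedging with the nowhere-vanishing $(n,0)$-form $\Omega$ is a fibrewise isomorphism $\Lambda^{0,1}\xrightarrow{\ \sim\ }\Lambda^{n,1}$. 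Since $d\Omega=\bar\partial\Omega$ also lies in $\Lambda^{n,1}$, the equation determines $a^{0,1}$, hence $a^{1,0}=\overline{a^{0,1}}$, hence $a$; together with the existence found above this is the unique solution. The step I expect to require the most care is showing $a$ is real — i.e.\ that the connection induced on $\Lambda^{n,0}$ is $\mathrm{U}(1)$- rather than $\C^\times$-valued — which is precisely where metric-compatibility, equivalently the normalisation linking $\Omega$ and $\omega$, genuinely enters.
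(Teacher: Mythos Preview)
The paper states this proposition without proof; it appears in the Introduction as a motivating, known example of the $\tilde G/G$ mechanism, and no argument is supplied anywhere in the text. So there is nothing to compare your proposal against.

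Your argument is correct and is the standard one. Splitting the Levi--Civita connection form along $\mathfrak{so}(2n)=\mathfrak{u}(n)\oplus\mathfrak{u}(n)^\perp$, identifying vanishing intrinsic torsion with $\nabla J=0$ (equivalently $\nabla\omega=0$), and then reading off the induced connection on $\Lambda^{n,0}$ via the trace of $w_{\mathfrak{u}(n)}$ is exactly how one proves this. Your check that $a$ is real is the right place to invoke metric compatibility (either via anti-Hermiticity of $w_{\mathfrak{u}(n)}$ or by differentiating $\Omega\wedge\bar\Omega\propto\omega^n$). For the uniqueness of $a$ from the $\Lambda^3$ equation, your use of the isomorphism $\Lambda^{0,1}\xrightarrow{\wedge\Omega}\Lambda^{n,1}$ is clean; the one step you might make more explicit is that the assertion $d\Omega\in\Lambda^{n,1}$ uses integrability of $J$, which is legitimate here since the ``moreover'' clause is stated in the setting where the full covariant equations already hold (hence the structure is K\"ahler and $J$ is integrable).
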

This gives a useful and powerful description of K\"ahler geometry in terms of differential forms $\omega, \Omega$, where the second one is bundle-valued. From the point of view of $G={\rm SU}(n)$ structure, the connection $a$ is a part of its intrinsic torsion. However, it does not need to vanish for the $\tilde{G}={\rm U}(n)$ structure, and becomes an important geometric object of K\"ahler geometry - the Chern connection. This provides another model example of our general mechanism: a $\tilde{G}={\rm U}(n)$-structure can be described using an $G={\rm SU}(n)$-structure whose defining forms are allowed to be bundle-valued.

\subsection{${\rm SO}(4)$-structures via ${\rm SU}(2)$-structures.} The main purpose of this paper is to develop a similar perspective on Riemannian geometry in four dimensions, with $\tilde{G}={\rm SO}(4)$ and $G={\rm SU}(2)$. In this case $H=\tilde{G}/G={\rm SO}(3)$. We will show how Riemannian geometry can be encoded into a triple of 2-forms $\Sigma^{1,2,3}$, where the forms are not globally-defined but rather bundle-valued. Similarly, the intrinsic torsion of the $G={\rm SU}(2)$-structure becomes an ${\rm SO}(3)$ connection. One gets a very powerful formalism that allows to describe four-dimensional Riemannian geometry in terms of differential forms. What arises is precisely the Plebanski formalism for General Relativity \cite{Plebanski:1977zz}, which we thus describe from the point of view of G-structures. 

We should warn the reared that, in what follows, we will often use the term ${\rm SU}(2)$-structure in an extended sense of allowing objects to become bundle-valued. Thus, rather than requiring a reduction of the frame bundle, we take as basic data same as that of an ${\rm SU}(2)$-structure, namely a triple of 2-forms (or, equivalently, an $\R^3$-valued 2-form) satisfying certain algebraic conditions. However, these forms need not be globally defined as ordinary differential forms, but may take values in an associated ${\rm SO}(3)$ vector bundle. This allows us to describe general Riemannian metrics without imposing topological restrictions. It is only when this bundle is trivial and admits a global section that one recovers the standard ${\rm SU}(2)$-structures. Such bundle-valued ${\rm SU}(2)$-structures can be referred to as ${\rm SO}(3)$-covariant ${\rm SU}(2)$-structures, but we will sometimes say simply ${\rm SU}(2)$-structures. 

We now describe our main results more concretely. We first need a convenient for our purposes definition of an ${\rm SU}(2)$-structure. We adopt a similar definition to that in \cite{Fowdar}, see Definition 2.2, but with the opposite sign in the orientation condition. Let $M$ be an orientable smooth 4-dimensional manifold. Let $\delta^{ij}$ be the standard Kronecker delta $\delta^{ij}=1, i=j$ and $\delta^{ij}=0, i\not=j$. 
\begin{definition}  \label{def:su2} {\bf (Local)}
An (oriented) ${\rm SU}(2)$-structure on $M$ is a triple $\Sigma^{1,2,3}\in \Omega^2(M)$ of 2-forms on $M$ such that 
\begin{enumerate} 
\item (Orthonormality condition) 
\[ \Sigma^i\wedge \Sigma^j = 2 \delta^{ij} \vol_\Sigma,
\]
for some nowhere vanishing 4-form $\vol_\Sigma$. 
\item (Orientation) For any $X,Y: i_X \Sigma^1=i_Y \Sigma^2$ we have $\Sigma^3(X,Y)\leq 0$. 
\end{enumerate}
\end{definition}
According to this definition, an ${\rm SU}(2)$-structure on $M$ defines a positive-definite (with respect to the conformal metric on $\Lambda^2 T^*M$) rank 3 subspace $\Lambda^+\subset \Lambda^2$. As is well-known, this is equivalent to selecting a Riemannian signature conformal metric on $M$. The objects $\Sigma^i$ then form an orthonormal basis of $\Lambda^+$. Moreover, they define the 4-form $\vol_\Sigma$, thus fixing the full metric on $M$ together with an orientation. As we will show in the main text, this metric $g_\Sigma$ is given by the formula (\ref{urbantke}), and the 2-forms $\Sigma^i$ are self-dual with respect to $g_\Sigma$, in the orientation $\vol_\Sigma$. The second orientation condition is one that is needed to select the particular sign on the right-hand side of the formula (\ref{urbantke}), see the main text for a proof. 

The above definition gives a local description of ${\rm SU}(2)$-structures. The following alternative definition takes care of the global aspects.
\begin{definition}{\bf (Global)}
An oriented $\mathrm{SU}(2)$-structure on an oriented smooth four-manifold $M$
consists of an oriented Euclidean rank-three vector bundle $E\to M$ and an
$E$-valued two-form
\[
\Sigma\in\Omega^2(M,E)
\]
such that the following conditions are satisfied:
\begin{enumerate}
\item The associated bundle map
\[
\Sigma:E\longrightarrow \Lambda^2T^*M
\]
is an isometric (with respect to the fibre metric on $E$ and the wedge-product pairing on $\Lambda^2T^*M$) bundle embedding whose image is a rank-three subbundle
\[
\Lambda^+_\Sigma:=\Sigma(E)\subset\Lambda^2T^*M.
\]
\item For any local oriented orthonormal
frame \(e_i\) of \(E\), writing \(\Sigma=\Sigma^i e_i\), the triple
\((\Sigma^1,\Sigma^2,\Sigma^3)\) satisfies the orientation condition of
Definition~\ref{def:su2}.
\end{enumerate}
\end{definition}

As we will prove in the main text, the data of an ${\rm SU}(2)$-structure defines a Riemannian metric $g_\Sigma$ and an orientation $\vol_\Sigma$ given by 
\begin{equation}\label{urbantke}
g_\Sigma(X,Y)\,\mathrm{vol}_\Sigma
=
-\tfrac{1}{6}\,\epsilon_{ijk}\,
i_X\Sigma^i \wedge i_Y\Sigma^j \wedge \Sigma^k, \qquad \vol_\Sigma := \tfrac{1}{6}\,\delta_{ij}\,\Sigma^i \wedge \Sigma^j.
\end{equation}
We will show that the metric $g_\Sigma$ so defined is such that $\Lambda^+_\Sigma$ coincides with the bundle
$\Lambda^+_{g_\Sigma}$ of self-dual 2-forms for the metric $g_\Sigma$, in the orientation $\vol_\Sigma$. Note that this shows that the bundle 
$E$ has the same topological type as the bundle of self-dual 2-forms of a Riemannian metric on $M$.

Our first main result describes the intrinsic torsion of an ${\rm SU}(2)$-structure as an ${\rm SO}(3)$ connection.
 \begin{theorem} \label{thm:nabla-sigma}
 The torsion of the Levi-Civita connection (for the metric defined by $\Sigma$'s) of an ${\rm SU}(2)$-structure is an ${\rm SO}(3)$ connection $A^i$ such that
\be\label{intr-torsion}
\nabla_\mu \Sigma^i_{\rho\sigma}+ \epsilon^{ijk} A^j_\mu \Sigma^k_{\rho\sigma} =0.
\ee
Moreover, the ${\rm SO}(3)$ connection $A^i$ is completely determined by the projection of this equation to $\Lambda^3$
\begin{equation}\label{sigma-A}
d\Sigma^i + \epsilon^{ijk} A^j\wedge \Sigma^k=0.
\end{equation}
This is a set of algebraic equations for the components of the 1-forms $A^i$, which has a unique solution (described in the main text), thus determining $A^i$ in terms of the $d\Sigma^i$. 
\end{theorem}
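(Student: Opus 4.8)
The plan is to read off the connection $A^i$ directly from the Levi-Civita connection $\nabla$ of $g_\Sigma$. Since $\nabla$ is compatible with both $g_\Sigma$ and the orientation $\vol_\Sigma$, it commutes with the Hodge star on $\Lambda^2T^*M$ and hence preserves its $(+1)$-eigenbundle $\Lambda^+_{g_\Sigma}$; by the identification $\Lambda^+_\Sigma=\Lambda^+_{g_\Sigma}$ established before the theorem, $\nabla$ therefore preserves $\Lambda^+_\Sigma$. As the $\Sigma^i$ form a pointwise frame for this rank-three bundle (they are linearly independent by the orthonormality condition), I can write $\nabla_\mu\Sigma^i=-M_\mu{}^i{}_j\,\Sigma^j$ for a unique $\mathfrak{gl}(3)$-valued $1$-form $M$. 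Applying $\nabla_\mu$ to $\Sigma^i\wedge\Sigma^j=2\delta^{ij}\vol_\Sigma$ and using $\nabla\vol_\Sigma=0$ gives $\nabla_\mu\Sigma^i\wedge\Sigma^j+\Sigma^i\wedge\nabla_\mu\Sigma^j=0$; substituting the expansion and using $\Sigma^k\wedge\Sigma^l=2\delta^{kl}\vol_\Sigma$ forces $M_\mu{}^i{}_j+M_\mu{}^j{}_i=0$, so $M$ is $\mathfrak{so}(3)$-valued and may be written $M_\mu{}^i{}_j=\epsilon^{ikj}A^k_\mu$ for a unique $\mathrm{SO}(3)$-connection $A^k$. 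This is precisely equation (\ref{intr-torsion}).

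Next I would derive (\ref{sigma-A}) by totally antisymmetrising (\ref{intr-torsion}) over the three form indices $\mu,\rho,\sigma$: because $\nabla$ is torsion-free, the antisymmetrisation of $\nabla_\mu\Sigma^i_{\rho\sigma}$ is $(d\Sigma^i)_{\mu\rho\sigma}$, while the antisymmetrisation of $\epsilon^{ijk}A^j_\mu\Sigma^k_{\rho\sigma}$ is a fixed multiple of $(\epsilon^{ijk}A^j\wedge\Sigma^k)_{\mu\rho\sigma}$; hence $d\Sigma^i+\epsilon^{ijk}A^j\wedge\Sigma^k=0$. So the connection constructed above does solve the algebraic system (\ref{sigma-A}).

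The heart of the matter is to show that (\ref{sigma-A}), regarded pointwise as a linear system for the twelve components of the three $1$-forms $A^i$, has a \emph{unique} solution; equivalently, that the bundle map $\mathcal W\colon(\Lambda^1)^{\oplus 3}\to(\Lambda^3)^{\oplus 3}$, $\mathcal W(A)^i:=\epsilon^{ijk}A^j\wedge\Sigma^k$, is an isomorphism. Both sides have rank $3\times 4=12$, so it is enough to check injectivity. I would verify this by a short direct computation in an oriented orthonormal coframe $e^0,\dots,e^3$ in which the $\Sigma^i$ take the standard constant form ($\Sigma^1=e^0\wedge e^1+e^2\wedge e^3$ and its cyclic images): setting $\mathcal W(A)^i=0$ and reading off coefficients in $\Lambda^3$ produces twelve linear relations among the $A^j_a$ which, chased around the cyclic $1\to2\to3$ symmetry, force all of them to vanish. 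Inverting $\mathcal W$ then expresses the $A^j$ explicitly in terms of the $\ast d\Sigma^i$ — the formula referred to as ``described in the main text''. A more invariant way to see the same thing: using the Hodge star to identify $\Lambda^3\cong\Lambda^1$ and the self-duality of the $\Sigma^k$, $\mathcal W$ becomes the map $(A^j)\mapsto(\epsilon^{ijk}J^k(A^j)^\sharp)$ built from the almost-quaternionic endomorphisms $J^k:=g_\Sigma^{-1}\Sigma^k$, which satisfy $J^iJ^j=-\delta^{ij}\id+\epsilon^{ijk}J^k$ by the orthonormality relations; invertibility and the inverse formula then drop out of this quaternion algebra together with the standard $\epsilon$-identities. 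Combining the three steps: the connection produced in the first paragraph satisfies (\ref{sigma-A}), which has a unique solution, so that solution is the intrinsic-torsion connection and (\ref{intr-torsion}) holds for it.

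I expect the only genuine work to lie in the third step — verifying that $\mathcal W$ is invertible and extracting the explicit inverse — while the first two steps are short structural arguments once one knows that $\nabla$ preserves the bundle $\Lambda^+_\Sigma$.
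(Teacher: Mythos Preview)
Your argument is correct, but it follows a different route from the paper for the first part. To establish (\ref{intr-torsion}), the paper sets up the full ${\rm SO}(4)$ representation-theoretic decomposition of $\Lambda^2\otimes E$ into $S_+^4\oplus S_+^2\oplus\R\oplus(S_+^2\otimes S_-^2)$ and then checks, by differentiating the identities $\Sigma^{i\alpha\beta}\Sigma^j_{\alpha\beta}=4\delta^{ij}$ and $\Sigma^i_\mu{}^\alpha\Sigma^i_{\nu\alpha}=3g_{\mu\nu}$, that the $S_+^4$, $\R$ and $S_+^2\otimes S_-^2$ components of $\nabla_\mu\Sigma^i$ vanish. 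Your argument bypasses this machinery: $\nabla$ commutes with the Hodge star, hence preserves $\Lambda^+$, which immediately kills the anti-self-dual piece $(\Lambda^2\otimes E)_9=S_+^2\otimes S_-^2$; differentiating the orthonormality relation then forces the coefficient matrix to be skew, killing the remaining symmetric pieces. This is the classical quaternionic-K\"ahler argument the paper alludes to around (\ref{alpha-connection}); it is shorter and more geometric, while the paper's approach has the advantage of fitting into the systematic framework it develops for later use (the operators $J_1$, $J_2$ and the action-functional analysis).

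For the uniqueness statement your ``invariant way'' is exactly what the paper does: the map you call $\mathcal W$, after Hodge-dualising, is the operator $J_1$ of (\ref{J-Sigma}), and the quaternion relation $J^iJ^j=-\delta^{ij}\id+\epsilon^{ijk}J^k$ you invoke is precisely (\ref{algebra}), from which the paper derives $J_1^2=2\mathbb{I}+J_1$ and hence $J_1^{-1}=\tfrac12(J_1-\mathbb{I})$, giving the explicit formula (\ref{torsion-d-Sigma}). Your brute-force coframe check would also work but is unnecessary once the operator identity is in hand.
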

Note that this statement is completely analogous to that in Proposition \ref{prop:frame}.  This result is not new, and also appeared in \cite{Fowdar}. However, here it is stated in terms of an ${\rm SO}(3)$ connection rather than a triple of 1-forms as in \cite{Fowdar}. The formula (\ref{intr-torsion}) is also standard in the theory of quaternionic K\"ahler manifolds, see formula (\ref{alpha-connection}) below. The proof we give in the main text is based on ${\rm SO}(4)$ representation theory, and appears to be new. 

Equipped with this statement, we can obtain a characterisation of the (parts of) the Riemann curvature of the metric defined by $\Sigma^i$ in terms of the curvature of $A^i$. Indeed, taking another Levi-Civita covariant derivative of (\ref{intr-torsion}) and anti-symmetrising, we get 
\[
2 R_{\mu\nu[\rho}{}^\alpha \Sigma^i_{|\alpha|\sigma]} + \epsilon^{ijk} F^j_{\mu\nu} \Sigma^k_{\rho\sigma} =0, \qquad F^i_{\mu\nu} := 2\partial_{[\mu} A^i_{\nu]} +\epsilon^{ijk} A^j_\mu A^k_\nu.
\]
Massaging this relation gives
\[
F^i_{\mu\nu} = \frac{1}{2} R_{\mu\nu}{}^{\rho\sigma} \Sigma^i_{\rho\sigma}.
\]
Recalling the decomposition of the Riemann curvature into its scalar, Ricci and Weyl parts, this immediately implies
\begin{proposition} \label{prop:curv}
The curvature of the ${\rm SO}(3)$ connection $A^i$ encodes the self-dual part of the Riemann curvature (with respect to a pair of indices). The self- and anti-self-dual parts of the 2-forms $F^i$ encode the scalar and self-dual Weyl, and tracefree part of Ricci respectively. We have
\begin{equation}
F^i_{\mu\nu} = \left( \frac{s}{12}\delta^{ij} + W_+^{ij}\right) \Sigma^j + \tilde{R}_{[\mu}{}^\alpha \Sigma^i_{|\alpha|\nu]},
\end{equation}
where $R_{\mu\nu} = R_{\mu\alpha\nu}{}^\alpha$ is the Ricci curvature, $s = g^{\mu\nu} R_{\mu\nu}$ is the scalar curvature,
\[
\tilde{R}_{\mu\nu} = R_{\mu\nu} - \frac{s}{4} g_{\mu\nu}
\]
is the tracefree part of Ricci, and
\[
\qquad \frac{s}{12}\delta^{ij} + W_+^{ij}  = \frac{1}{8} R^{\mu\nu\rho\sigma} \Sigma^i_{\mu\nu} \Sigma^j_{\rho\sigma},
\]
where $W_+^{ij}$ is the $3\times 3$ tracefree matrix encoding the self-dual part of the Weyl curvature. 
\end{proposition}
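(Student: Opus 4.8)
The plan is to read the identity $F^i_{\mu\nu}=\tfrac12 R_{\mu\nu}{}^{\rho\sigma}\Sigma^i_{\rho\sigma}$, derived just above the statement, as the assertion that $F^i$ is the image of the self-dual two-form $\Sigma^i\in\Lambda^+_\Sigma$ under the Riemann curvature operator $\Rm\colon\Lambda^2\to\Lambda^2$. Since $\Sigma^i$ lies in $\Lambda^+_\Sigma$, its image decomposes according to $\Lambda^2=\Lambda^+_\Sigma\oplus\Lambda^-_\Sigma$, and the proposition is exactly the identification of the self-dual part with the block of $\Rm$ mapping $\Lambda^+_\Sigma$ to itself, and of the anti-self-dual part with the block mapping $\Lambda^+_\Sigma$ to $\Lambda^-_\Sigma$. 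The cleanest route is to substitute the Ricci decomposition of the curvature tensor, which in dimension four reads
\[
R_{\mu\nu\rho\sigma}=W_{\mu\nu\rho\sigma}+\tfrac12\bigl(\tilde R_{\mu\rho}g_{\nu\sigma}-\tilde R_{\mu\sigma}g_{\nu\rho}+\tilde R_{\nu\sigma}g_{\mu\rho}-\tilde R_{\nu\rho}g_{\mu\sigma}\bigr)+\tfrac{s}{12}\bigl(g_{\mu\rho}g_{\nu\sigma}-g_{\mu\sigma}g_{\nu\rho}\bigr),
\]
into $\tfrac12 R_{\mu\nu}{}^{\rho\sigma}\Sigma^i_{\rho\sigma}$ and simplify the three contributions separately.

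First I would dispatch the two simpler terms. The scalar term gives $\tfrac{s}{12}\Sigma^i_{\mu\nu}$ after a one-line contraction, producing the $\tfrac{s}{12}\delta^{ij}\Sigma^j$ summand. For the Weyl term I would invoke the standard ${\rm SO}(4)={\rm SU}(2)\times{\rm SU}(2)$ decomposition of algebraic curvature tensors (Singer--Thorpe): as an operator on $\Lambda^2$ the Weyl tensor is block-diagonal, $W=W_+\oplus W_-$, with $W_\pm$ a tracefree symmetric endomorphism of $\Lambda^\pm$ that annihilates $\Lambda^\mp$; hence $\tfrac12 W_{\mu\nu}{}^{\rho\sigma}\Sigma^i_{\rho\sigma}=W_+^{ij}\Sigma^j$ with $W_+^{ij}$ symmetric tracefree, which is by definition the self-dual Weyl operator. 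Once the whole formula is assembled, contracting it with $\Sigma^{j\mu\nu}$ and using $\Sigma^i_{\mu\nu}\Sigma^{j\mu\nu}=4\delta^{ij}$ (equivalently, the wedge-product pairing $\Sigma^i\wedge\Sigma^j=2\delta^{ij}\vol_\Sigma$) reproduces the stated identity $\tfrac{s}{12}\delta^{ij}+W_+^{ij}=\tfrac18 R^{\mu\nu\rho\sigma}\Sigma^i_{\mu\nu}\Sigma^j_{\rho\sigma}$ --- the anti-self-dual term of the formula drops out of this contraction because $\tilde R$ is symmetric and tracefree --- and the pair symmetry $R_{\mu\nu\rho\sigma}=R_{\rho\sigma\mu\nu}$ exhibits the right-hand side as symmetric in $ij$.

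It remains to handle the tracefree-Ricci term of $\tfrac12 R_{\mu\nu}{}^{\rho\sigma}\Sigma^i_{\rho\sigma}$. A short index manipulation shows it equals $\tilde R_{[\mu}{}^\alpha\Sigma^i_{|\alpha|\nu]}$ exactly, with the normalisation checkable by contracting the full identity with $g^{\mu\rho}$ to recover the Ricci tensor. The real content --- and the step I expect to be the main obstacle --- is to verify that this expression is genuinely anti-self-dual in $\mu\nu$, since this is the one place where the algebra of an ${\rm SU}(2)$-structure enters essentially. Writing $\tilde R$ as a symmetric tracefree endomorphism of $TM$, one observes that $\tilde R_{[\mu}{}^\alpha\Sigma^i_{|\alpha|\nu]}$ is the skew part of the composition $\tilde R\,\Sigma^i$, i.e. $\tfrac12\{\tilde R,\Sigma^i\}$, and then either argues representation-theoretically --- it depends equivariantly and bilinearly on $\Sigma^i\in\Lambda^+_\Sigma$ and $\tilde R\in{\rm Sym}^2_0(TM)$, and in ${\rm SU}(2)\times{\rm SU}(2)$ terms $\Lambda^+_\Sigma\cong(\mathbf 3,\mathbf 1)$ and ${\rm Sym}^2_0(TM)\cong(\mathbf 3,\mathbf 3)$, while $(\mathbf 3,\mathbf 1)$ does not appear in $(\mathbf 3,\mathbf 1)\otimes(\mathbf 3,\mathbf 3)$, so the skew part can carry no $\Lambda^+_\Sigma$ component --- or verifies it by hand from the quaternionic identity $(\Sigma^i)^\mu{}_\alpha(\Sigma^j)^\alpha{}_\nu=-\delta^{ij}\delta^\mu{}_\nu+\epsilon^{ijk}(\Sigma^k)^\mu{}_\nu$ (a consequence of the conditions in Definition~\ref{def:su2}) together with the usual four-dimensional $\epsilon$-tensor contractions. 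Assembling the scalar, Weyl and tracefree-Ricci contributions then yields the displayed formula; apart from the anti-self-duality check the remaining steps are routine, with the bookkeeping of the tracefree-Ricci term the most tedious.
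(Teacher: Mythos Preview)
Your argument is correct and complete, but it proceeds along a route genuinely different from the paper's. Both start from $F^i_{\mu\nu}=\tfrac12 R_{\mu\nu}{}^{\rho\sigma}\Sigma^i_{\rho\sigma}$. You then feed in the standard Ricci decomposition of the Riemann tensor and evaluate the three contributions separately, invoking the Singer--Thorpe block form of the Weyl operator for the $W_+^{ij}\Sigma^j$ piece and a representation-theoretic (or quaternionic-algebra) argument for the anti-self-duality of $\tilde R_{[\mu}{}^\alpha\Sigma^i_{|\alpha|\nu]}$. The paper works in the opposite direction: it contracts $F^i$ with $\Sigma^i_\mu{}^\alpha$ and uses the identity $\Sigma^i_{\mu\nu}\Sigma^i_{\rho\sigma}=g_{\mu\rho}g_{\nu\sigma}-g_{\mu\sigma}g_{\nu\rho}+\epsilon_{\mu\nu\rho\sigma}$ to obtain $R_{\mu\nu}=-\Sigma^i_\mu{}^\alpha F^i_{\alpha\nu}$ directly, and then inverts this relation using the decomposition of $\Lambda^2\otimes E$ into irreducibles established earlier in the paper. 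Your approach is closer to the textbook four-dimensional curvature picture and imports the Singer--Thorpe decomposition as a black box; the paper's is more self-contained, relying only on the $\Sigma$-algebra it has built up, and ties the result more visibly to the $\Lambda^2\otimes E$ decomposition that drives the rest of the formalism. One small wording issue: where you say the anti-self-dual term drops out of the contraction with $\Sigma^{j\mu\nu}$ ``because $\tilde R$ is symmetric and tracefree'', the operative reason is simply that it lies in $\Lambda^-$, which is wedge-orthogonal to $\Sigma^j\in\Lambda^+$ --- exactly the anti-self-duality you verify in the next paragraph.
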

This shows how Riemannian geometry can be done via 2-forms $\Sigma^i$, and in particular shows that the Einstein condition for the metric defined by $\Sigma$'s can be imposed by requiring that $F^i\subset \Lambda^+$. This characterisation is well-known in the context of quaternion K\"ahler geometry in four dimensions, see below, so this proposition is not new. 

We also point out that the proposed point of view on Riemannian geometry in four dimensions has the advantage that both K\"ahler and hyper-K\"ahler geometries are automatically encompassed. Indeed, they both correspond to additional reductions of the associated ${\rm SO}(3)$ bundle in which $\Sigma^i$ are valued. K\"ahler geometry corresponds to a ${\rm U}(1)$ reduction by a preferred 2-form $\omega$, where ${\rm U}(1)$ then acts by rotations in the plane orthogonal to $\omega$. Hyper-K\"ahler geometry corresponds to a $\{1\}$ reduction, in which the bundle of $\Sigma$'s is trivial and admits a global frame. 

We now describe our main new result. One of the most interesting consequences of the developed viewpoint is that $\Sigma$'s whose associated metric is Einstein arise as critical points of a natural and {\bf unique} action functional. To describe this, we note that infinitesimal diffeomorphisms act on $\Sigma^i$ by the Lie derivative. This action, together with the infinitesimal action of ${\rm SO}(3)$ on $\Sigma^i$, are described by the following formulas
\be\label{intro-transforms}
\delta_X \Sigma^i = d i_X \Sigma^i + i_X d\Sigma^i , \qquad \delta_\phi \Sigma^i = [\phi, \Sigma]^i.
\ee
Here $X\in \Gamma(TM)$ is a vector field, and $\phi\in \Gamma(E)$ is a section of an $\R^3$ vector bundle over $M$. 
\begin{theorem}\label{thm:action}
There is a unique (up to an overall multiple) action functional $S[\Sigma]$ of the type (\ref{gen-action-intr}) that is diffeomorphism and ${\rm SO}(3)$ invariant, i.e. invariant under both transformations in (\ref{intro-transforms}). It is given by
\be\label{intr-action}
S[\Sigma] = -\frac{1}{2} \int \epsilon^{ijk} \Sigma^i \wedge A^j \wedge A^k.
\ee
Here $A$ is the canonical ${\rm SO}(3)$ connection (intrinsic torsion) determined by $\Sigma$'s via (\ref{sigma-A}). The critical points of this action are $\Sigma$'s whose associated metric is Einstein. 
\end{theorem}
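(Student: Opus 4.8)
The plan is to split the statement into two parts: (i) uniqueness of the $\mathrm{SO}(3)$- and diffeomorphism-invariant functional of the form \eqref{gen-action-intr}, and (ii) identification of its critical points with Einstein metrics. For part (i), I would first classify the irreducible pieces $T_k$ of the intrinsic torsion $T\in\Lambda^1\otimes\mathfrak{g}^\perp$ for $G=\mathrm{SU}(2)$ in four dimensions. Here $\mathfrak{so}(4)=\mathfrak{su}(2)_+\oplus\mathfrak{su}(2)_-$ and $\mathfrak{g}^\perp=\mathfrak{su}(2)_+$ is exactly where the $\mathrm{SO}(3)$ connection $A^i$ lives, so by Theorem~\ref{thm:nabla-sigma} the intrinsic torsion is packaged in $A^i\in\Omega^1(M,\mathfrak{so}(3))$. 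The key extra requirement beyond diffeomorphism invariance is invariance under the \emph{local} $\mathrm{SO}(3)$ gauge transformation $\delta_\phi\Sigma^i=[\phi,\Sigma]^i$, under which $A^i$ transforms as a connection, $\delta_\phi A^i = -d_A\phi^i$ (this should be checked from \eqref{sigma-A}). A generic linear combination $\sum_k c_k|T_k|^2$ is built from $A^i$ purely algebraically and is \emph{not} gauge invariant; demanding gauge invariance forces the combination to reorganise into something expressible through the curvature $F^i=dA^i+\tfrac12\epsilon^{ijk}A^j\wedge A^k$ and the $\Sigma^i$. I would then argue, using Proposition~\ref{prop:curv}, that the only $\mathrm{SO}(3)$-invariant, diffeomorphism-invariant, second-order (in $\Sigma$) quantity of the right homogeneity is a multiple of $\int \epsilon^{ijk}\Sigma^i\wedge F^j$ — but on integrating by parts this equals $-\tfrac12\int\epsilon^{ijk}\Sigma^i\wedge A^j\wedge A^k$ up to a total derivative and a term $\int d\Sigma^i\wedge(\cdots)$ that vanishes or reorganises by \eqref{sigma-A}. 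So \eqref{intr-action} is the unique candidate. The cleanest route is to count: among the $15$ torsion-squared invariants of \cite{Fowdar}, impose the $\mathrm{SO}(3)$ gauge Ward identity and show the solution space is one-dimensional.

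For part (ii) I would compute the Euler–Lagrange equations of \eqref{intr-action}. Writing $S[\Sigma]=-\tfrac12\int\epsilon^{ijk}\Sigma^i\wedge A^j\wedge A^k$ and noting $A=A[\Sigma]$ is determined algebraically by $d\Sigma$ via \eqref{sigma-A}, I would vary $\Sigma$ directly; the variation of $A$ contributes a term proportional to $\epsilon^{ijk}\Sigma^i\wedge(\delta A^j)\wedge A^k$, and I would show this term vanishes on-shell for the variation induced by $A$'s dependence on $\Sigma$, precisely because of the defining relation $d\Sigma^i+\epsilon^{ijk}A^j\wedge\Sigma^k=0$ (differentiate this under the variation). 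Hence effectively $\delta S = -\tfrac12\int \epsilon^{ijk}\,\delta\Sigma^i\wedge A^j\wedge A^k$ plus boundary terms coming from integration by parts of $\delta A$, and I would rewrite $\epsilon^{ijk}A^j\wedge A^k$ in terms of $F^i$ using the flatness-like identity $dA^i + \tfrac12\epsilon^{ijk}A^j\wedge A^k = F^i$, so that the equation of motion reads $F^i = \Phi^{ij}\Sigma^j$ for some symmetric matrix-valued function $\Phi^{ij}$ on $M$; that is, $F^i\in\Lambda^+_\Sigma$. By Proposition~\ref{prop:curv}, $F^i=(\tfrac{s}{12}\delta^{ij}+W_+^{ij})\Sigma^j+\tilde R_{[\mu}{}^\alpha\Sigma^i_{|\alpha|\nu]}$, and the anti-self-dual part is exactly $\tilde R_{[\mu}{}^\alpha\Sigma^i_{|\alpha|\nu]}$, which vanishes iff the tracefree Ricci $\tilde R_{\mu\nu}=0$, i.e. iff $g_\Sigma$ is Einstein. (One should be careful that the field equations only force $F^i$ to lie in a subbundle, not that $\Phi^{ij}$ be traceless — the trace part is the scalar curvature, which is unconstrained, consistent with the Einstein condition with arbitrary cosmological constant.)

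The main obstacle I anticipate is part (i): establishing \emph{uniqueness} rigorously rather than just exhibiting \eqref{intr-action} as \emph{an} invariant. The subtlety is that "invariant under \eqref{intro-transforms}" must be interpreted correctly — the functional need only be invariant \emph{modulo boundary terms} under diffeomorphisms (it is the integral that is invariant), and under global $\mathrm{SO}(3)$ rotations it must be strictly invariant, but the strong constraint is the \emph{local} gauge invariance which is what \eqref{intro-transforms} with $\phi\in\Gamma(E)$ really encodes. I would need to make precise the space of allowed Lagrangian densities — $\mathrm{SO}(3)$-covariant polynomials of degree two in $A$ wedged with $\Sigma$, plus $\Sigma$-dependence through the metric — and then show the gauge Ward identity $\delta_\phi S=0$ (for non-constant $\phi$) kills all but one direction. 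A clean way is: any such density is $P\,\dvol$ with $P$ an $\mathrm{SO}(3)$-invariant, metric-contracted quadratic in $A^i_\mu$; decompose $A$ into its $\Sigma$-irreducible components (the torsion classes), write the most general $P$, and impose that the non-gauge-covariant pieces cancel — this is a finite linear-algebra computation whose kernel I claim is one-dimensional and represented by \eqref{intr-action}. The risk is that I am implicitly using a homogeneity/dimension restriction (that $S$ is built from squares of \emph{first} derivatives of $\Sigma$, hence quadratic in $A$) that needs to be stated as part of the hypothesis "of the type \eqref{gen-action-intr}"; I would flag this explicitly.
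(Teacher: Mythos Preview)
Your proposal is broadly correct and would work, but it takes a longer road than the paper on the uniqueness part, and contains one index slip worth flagging.

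For part (i), you propose to start from the $15$ torsion-squared invariants of \cite{Fowdar} and impose the local ${\rm SO}(3)$ Ward identity as a linear constraint. The paper instead first exploits \emph{global} ${\rm SO}(3)$-covariance: since the $\Sigma^i$ are treated as sections of an ${\rm SO}(3)$ bundle, the relevant decomposition of the intrinsic torsion $A\in\Lambda^1\otimes E$ is under ${\rm SO}(4)$, not ${\rm SU}(2)$, and there are only \emph{two} irreducibles (eigenspaces of the operator $J_1$ of \eqref{J-Sigma}). Hence there are only two independent quadratic invariants, which one may take to be $\int(A^i_\mu)^2$ and $\int A^i_\mu J_1(A)^{i\mu}$. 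The first is a connection mass term and is manifestly not locally gauge invariant; the second is shown, via integration by parts and the defining relation $d\Sigma^i=-\epsilon^{ijk}A^j\wedge\Sigma^k$, to equal $\int\Sigma^i\wedge F^i$ and is therefore gauge invariant. So uniqueness is a two-line argument once the $2$-versus-$15$ reduction is made. Your approach is valid but you are solving a $15\times 15$ linear system where the paper solves a $2\times 2$ one. Note also a slip: the invariant is $\int\Sigma^i\wedge F^i$ (contraction with $\delta^{ij}$), not $\int\epsilon^{ijk}\Sigma^i\wedge F^j$ as you wrote.

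For part (ii), your direct variation of the second-order action is essentially the same mechanism the paper uses, just packaged differently. The paper passes to the first-order Plebanski functional $S[\Sigma,A,\Psi]=\int\Sigma^i F^i - \tfrac12(\Psi^{ij}+\tfrac{\Lambda}{3}\delta^{ij})\Sigma^i\Sigma^j$ with $A$ and the Lagrange multiplier $\Psi^{ij}$ independent; variation in $A$ reproduces \eqref{sigma-A}, and variation in $\Sigma$ gives \eqref{einstein} directly. Your observation that the $\delta A$ contribution drops out of $\delta\!\int\Sigma^i F^i$ because $\int\Sigma^i d_A\delta A^i=-\int(d_A\Sigma^i)\wedge\delta A^i=0$ is exactly why the first- and second-order formulations agree; but this vanishing is an \emph{identity} (it is the defining relation for $A$), not an on-shell condition as you phrased it. Either way one lands on $F^i=\Phi^{ij}\Sigma^j$ with $\Phi$ symmetric, and Proposition~\ref{prop:curv} then gives $\tilde R_{\mu\nu}=0$.
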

The coefficient in front of the action is chosen for future convenience and will be explained in the main text. There is also a first order in derivatives version of (\ref{intr-action}), which is a functional of both $\Sigma$ and an additional set of $\R^3$-valued 1-form fields, which, after one imposes their Euler-Lagrange equations, become identified with $A^i$. As we shall see in the main text, this first order version of the action principle is what is known in the physics literature as the Plebanski formalism for General Relativity, see \cite{Plebanski:1977zz}. 

The mechanism at play in the above construction is that $\tilde{G}/G=H$ is a Lie group, and this allowed to describe $\tilde{G}$-structures as $H$-covariant $G$-structures. In particular, this leads to the preferred action functionals of the general type (\ref{gen-action-intr}) that are locally $H$-invariant. It would be interesting to determine such functionals for the example $\tilde{G}={\rm U}(n), G={\rm SU}(n)$, i.e. almost-Hermitian structures, but this is beyond the scope of this paper. For a general $G$-structure no such action functional selection principle is possible. However, there could still be functionals of the type (\ref{gen-action-intr}) that possess (local) symmetry that is larger than that of diffeomoprhisms. It would be very interesting to find examples. Another interesting and important question is to analyse the geometric flows that arise as the gradient flows of the action functionals of the type (\ref{gen-action-intr}). For ${\rm SU}(2)$-structures this has been studied in \cite{Fowdar}, but not for the action functional (\ref{intr-action}). Some information about its gradient flow can be easily extracted from the results in the main text, but we leave the complete treatment to a separate work. 

\subsection{Almost quaternion Hermitian manifolds.} We end this Introduction with comments on parallels between our story and the well-known theory of almost quaternion Hermitian manifolds. The standard reference for the latter is \cite{BG}, Chapter 12.

An almost quaternionic structure on $M^{4n}$ is a reduction of the frame bundle $FM$ to an ${\rm GL}(n,\mathbb{H}) {\rm Sp}(1)$ subbundle. Such a reduction is achieved by choosing a rank 3 subbundle $Q\subset {\rm End}(TM)$ of the endomorphism bundle, with a local section $(J^1, J^2, J^3)$ satisfying the algebra of imaginary quaternions. An almost quaternionic Hermitian manifold $M^{4n}$ is a further reduction to ${\rm Sp}(n) {\rm Sp}(1)$, where the endomorphisms $(J^1, J^2, J^3)$ are required to be compatible with a Riemannian metric on $M^{4n}$. 
There are subtle obstructions to existence of ${\rm Sp}(n) {\rm Sp}(1)$ structures for $n>1$, while for $n=1$ we have ${\rm Sp}(1) {\rm Sp}(1) = {\rm SO}(4)$, so any orientable Riemannian manifold has this structure. It is this case that has many intersections with our formalism. 

An almost quaternionic Hermitian structure is called quaternionic K\"ahler if the Levi-Civita connection preserves the quaternionic structure. Given an almost quaternionic Hermitian structure, one can equivalently consider the triple of 2-forms $(\Sigma^1, \Sigma^2,\Sigma^3)$. These define the canonical 4-form
\be
\Omega = \Sigma^1\wedge \Sigma^1 +\Sigma^2\wedge \Sigma^2 +\Sigma^3\wedge \Sigma^3.
\ee
One then shows that, for $n>1$ the manifold is quaternionic K\"ahler if and only if $\nabla \Omega=0$. For $n>2$ one can show that an almost quaternionic Hermitian manifod is quaternionic K\"ahler if $d\Omega=0$. In the case of 8D manifolds this is not sufficient, one also needs the condition that the subbundle $Q^*\subset \Lambda^2 TM$ is a differential ideal. The case $n=1$ is special, see below.

For any quaternionic K\"ahler manifolds the subbundle $Q^*\subset \Lambda^2 TM$ is preserved by the Levi-Civita connection. Because of this, we have
\be\label{alpha-connection}
\nabla \Sigma^i =- A^{ij} \wedge \Sigma^j,
\ee
where $A^{ij} = - A^{ji}$ is a set of 1-forms, and the minus on the right-hand side of this formula is for agreement with our previous formulas. These form an ${\rm SO}(3)$ connection $A$ whose curvature $F=dA + A\wedge A$ contains information about a part of the Riemann curvature. This curvature can be viewed as a map
\be
F: \Lambda^2 TM \to {\rm SkewEnd}(Q^*).
\ee
For any quaternionic K\"ahler manifold one can show that $F= \lambda \pi_{Q^*}$, where $\pi_{Q^*}$ is the pointwise orthogonal projection $\pi_{Q^*} : \Lambda^2 TM \to Q^*$. Here $\lambda$ is a multiple of scalar curvature. 

The case $n=1$ is special. In this case $\Omega$ coincides with the volume form, so the condition $\nabla \Omega=0$ is automatically satisfied. In this case $Q^*$ coincides with the subbundle $Q=\Lambda^+$ of self-dual 2-forms. As such, it is automatically preserved by the Levi-Civita connection. We then have the connection 1-forms $A^{ij}$, as well as the curvature $F$. One then defines an almost quaternionic Hermitian 4-manifold to be quaternion K\"ahler if $F= \lambda \pi_{Q^*}$. It is then not hard to show, and follows from our Proposition \ref{prop:curv}, that a 4-manifold is quaternion K\"ahler if and only if it is Einstein and one of the two halves $W^\pm$ of its Weyl curvature vanishes. 

The difference between our formalism and the setup of almost quaternionic Hermitian 4-manifolds is in the starting point. In the latter case one starts with a Riemannian metric, plus a triple of almost complex structures satisfying the algebra of imaginary quaternions. One then passes to the corresponding 2-forms $\Sigma^i, i=1,2,3$, which are self-dual. Because of this one can write the equation (\ref{alpha-connection}), which is central for the whole theory, and gives access to (a part of) the Riemann curvature. In our case, crucially, we start with a bundle-valued triple of 2-forms $\Sigma^i$, and the metric is defined by this data rather than being independently specified. In this approach it takes some work to arrive at the equation (\ref{alpha-connection}), as we explain in the main text. 

\subsection{Outline of the paper.} We start by describing ${\rm SU}(2)$ structures in more detail, and show how an ${\rm SU}(2)$ structure on $M$ defines a metric. There is some new material in this section, in particular we give a new formula (\ref{new-metric}) for the metric in terms of an ${\rm SU}(2)$ structure. We then proceed to a description of how spaces of $E$-valued 1- and 2-forms on $M$ split into their irreducible components with respect to the action of ${\rm SU}(2)\times{\rm SU}(2)$. Some of the material here is new, in particular the decomposition of $E\otimes \Lambda^2$ using the operator $J_2$ defined in (\ref{J2}). We then discuss, in Section \ref{sec:torsion}, the notion of the intrinsic torsion of an ${\rm SU}(2)$ structure. We also characterise how (a part of the) Riemann curvature is determined by the intrinsic torsion, and how the Einstein condition can be imposed in this language. We analyse and construct diffeomorphism and ${\rm SO}(3)$-invariant functionals that are second order in derivatives and quadratic in perturbations of an ${\rm SU}(2)$ structure in Section \ref{sec:linearised}. The key result of this section is that there is a unique diffeomorphism and ${\rm SO}(3)$-invariant action functional. We construct non-linear action functionals in Section \ref{sec:actions}. This section contains the proof of Theorem \ref{thm:action} described in the Introduction.

\section{${\rm SU}(2)$-structures in four dimensions}
\label{sec:structures}

The geometry of ${\rm SU}(n)$-structures was studied in \cite{Cabrera}, \cite{Cabrera-Swann}. In particular, it was shown that the intrinsic torsion of the ${\rm SU}(n)$-structure is determined by the exterior derivatives of the differential forms encoding the structure. Another closely related to our goals reference is \cite{Fowdar}, where ${\rm SU}(2)$-structures in four dimensions were studied, intrinsic torsion analysed, and some second order in derivatives functionals constructed. Other related papers are \cite{Cabrera-Harmonic}, \cite{Fadel}. However, the treatment in this paper differs from all these references in the key fact that we use  what can be called ${\rm SO}(3)$-covariant ${\rm SU}(2)$-structures to describe ${\rm SO}(4)$-structures. The purpose of this section is to explain this concept in details, and explore its consequences. 

\subsection{2-forms and geometric structures in four dimensions.} We start by recalling, following \cite{Donaldson}, that many differential geometric structures in four dimensions can be fruitfully encoded by 2-forms. In all these examples $M$ is a smooth oriented 4-manifold.
\begin{itemize}
\item A {\bf conformal structure} on $M$ can be encoded by a 3-dimensional subbundle $\Lambda^+\subset \Lambda^2$ on which the wedge product form is strictly positive. A {\bf Riemannian metric} is specified when in addition a volume form is chosen.
\item A (compatibly oriented) {\bf symplectic structure} on $M$ is a closed 2-form $\omega$ which is positive $\omega\wedge \omega>0$. 
\item An {\bf almost-complex structure} on $M$ is given by a 2-dimensional oriented real subbundle $\Lambda^{2,0}\subset \Lambda^2$ on which the wedge product form is strictly positive. Oriented real 2-dimensional bundles are equivalent to complex line bundles, so we can alternatively view $\Lambda^{2,0}$ as this complex line bundle. 
\item A {\bf complex structure} is given by an almost-complex structure $\Lambda^{2,0}$ such in the neighbourhood of each point there exists a pair of closed 2-forms $\Sigma^{1,2}$ such that $\Sigma^1\wedge\Sigma^1 = \Sigma^2\wedge \Sigma^2$ and $\Sigma^1\wedge \Sigma^2=0$ which span $\Lambda^{2,0}$. Alternatively, there exists a closed complex 2-form $\Omega:=\Sigma^1+\im \Sigma^2$ that is decomposable $\Omega\wedge \Omega=0$ and spans $\Lambda^{2,0}$ viewed as a complex line bundle.
\item A {\bf K\"ahler structure} is a complex structure together with a symplectic structure $\omega$ that is wedge product orthogonal to $\Lambda^{2,0}$.  
\item A {\bf hyper-K\"ahler structure} is a triple of closed 2-forms $\Sigma^{1,2,3}$ that satisfy $\Sigma^i\wedge \Sigma^j\sim \delta^{ij}$, or alternatively $\Sigma^1\wedge \Sigma^1=\Sigma^2\wedge \Sigma^2=\Sigma^3\wedge \Sigma^3$ and $\Sigma^i\wedge \Sigma^j = 0, i\not=j$.
\end{itemize}
In view of these examples, a very natural question is how to do geometry when the relevant geometric structure is encoded by 2-forms. While the answer to this question is widely known in the case of K\"ahler structures, as was reviewed in the Introduction, the formalism that allows to deal with Riemannian metrics is not well-known, and one of the main aims of this paper is to develop and explain it. 

\subsection{${\rm SU}(2)$-structures from spinors.} We now recall in more details how ${\rm SU}(2)$-structures in four dimensions, together with triples of 2-forms $\Sigma^{1,2,3}$, arise from spinors. We first develop the scenario where the spinor $\psi$ is globally-defined, so that the constructed from it 2-forms are globally-defined. Eventually, in our application to the case of Riemannian geometry, we will need to relax the assumption that $\psi$ is globally-defined. 

Let $M$ be an oriented Riemannian 4-manifold, which we for now assume to be spin, so that a spin structure exists, and for simplicity simply connected, so that this spin structure is unique. The double cover of ${\rm SO}(4)$ is ${\rm Spin}(4)={\rm SU}(2)\times{\rm SU}(2)$. As is standard, the best way to see this is to identify $\R^4=\mathbb{H}$, and ${\rm SU}(2)={\rm Sp}(1)$, the group of unit quaternions. Then ${\rm Sp}(1)\times{\rm Sp}(1)$ acts on $x\in \mathbb{H}$ via $(q_1,q_2): x\to q_1 x q_2^{-1}$. This gives the double-covering map $\lambda: {\rm SU}(2)\times{\rm SU}(2)\to {\rm SO}(4)$, with $(-1,-1)\in {\rm Sp}(1)\times{\rm Sp}(1)$ sent to the identity in ${\rm SO}(4)$. We now take a (unit $\langle \hat{\psi},\psi\rangle=1$) semi-spinor $\psi\in S_+$, on which ${\rm Spin}(4)$ acts via $(q_1,q_2): \psi \to q_1 \psi$. Any such spinor is stabilised by the copy of ${\rm SU}(2)$ that does not act on it. We then construct the 2-forms $\omega,\Omega$ from $\psi$, via (\ref{omega-Omega-psi}), which satisfy (\ref{omega-Omega}). Alternatively, changing the names to $\omega=\Sigma^3, \Omega=\Sigma^1+\im \Sigma^2$ the real 2-forms $\Sigma^{1,2,3}$ satisfy $\Sigma^i\wedge \Sigma^j\sim \delta^{ij}$. These 2-forms are invariant under the action of $\lambda({\rm SU}(2))={\rm SU}(2)\subset {\rm GL}(4)$ (this will be verified below), and, when globally-defined, provide a reduction of the principal ${\rm GL}(4)$ frame bundle to an ${\rm SU}(2)$ subbundle, and thus an ${\rm SU}(2)$-structure. 

\subsection{Hyper-K\"ahler and K\"ahler geometries.} Let us now discuss the natural differential equations that can be imposed on such an ${\rm SU}(2)$-structure. When all 3 of these 2-forms are closed $d\Sigma^i=0$, this ${\rm SU}(2)$-structure is integrable, and we obtain a hyper-K\"ahler structure. This statement is well-known, but will also follow from what is proven below for ${\rm SO}(3)$ covariant ${\rm SU}(2)$-structures. In this case the Riemannian holonomy is reduced to (a subgroup of) ${\rm SU}(2)$, the associated metric is Ricci flat. In this case the spinor $\psi$ that we started from is parallel $\nabla\psi=0$ with respect to the (spinor lift of) the Levi-Civita connection. 

Instead, we can impose a weaker condition
\begin{equation}\label{d-omegas}
d\omega = 0, \qquad d\Omega = \im a\wedge \Omega,
\end{equation}
for some $a\in \Lambda^1$. In this case, the almost-complex structure defined by $\Omega$ is still integrable, and we have a K\"ahler structure. Note that in this case we only need $\omega$ to be globally-defined, while $\Omega$ can be defined modulo multiplication by a phase $\Omega\to e^{\im\phi(x)} \Omega$, where $\phi(x)$ is a function. The second equation in (\ref{d-omegas}) continues to make sense even when $\Omega$ is only defined up to a phase, because if it is satisfied with one choice of $\Omega$, it will be satisfied with a different choice of $\Omega$ and a different, ${\rm U}(1)$ gauge-transformed connection $d(e^{\im \phi}\Omega) = \im (a+ d\phi) \wedge \Omega$. In other words, in this case $\Omega$ is a section of a complex line bundle, and $a$ is a ${\rm U}(1)$ connection on this line bundle. Also, in this case the spinor $\psi$ does not need to be globally-defined, for it is sufficient for it to be defined only up to a phase. In particular, this means that $M$ does not even need to be spin. 

An alternative way of describing the K\"ahler setting is to say that it arises from a unit semi-spinor, but the spinor $\psi$ is only required to be fixed up to scale $q_1 \psi\sim \psi$. The (projective) stabiliser of $\psi$ is now larger and is given by $\tilde{G}={\rm U}(1)\times {\rm SU}(2)={\rm U}(2)$. So, this gives a $\lambda({\rm U}(2))={\rm U}(2)$ structure. In this case the spinor defines the real 2-form $\omega$, and 2-form $\Omega$ only modulo phase. This is sufficient to define the metric together with an almost-complex structure. As we already commented on, the equations (\ref{d-omegas}) are still meaningful even though $\Omega$ is only defined up to a phase. 

In the first hyper-K\"ahler example, the 2-forms $\Sigma^{1,2,3}$ are globally well-defined, which implies topological restrictions on $M$. First, because $\Sigma^{1,2,3}$ arise from a spinor construction, the spinor $\psi$ needs to be globally-defined, and so $M$ must be spin. Further, $\Sigma^{1,2,3}$ are all self-dual in the metric that they define via (\ref{urbantke}), and thus give an orthonormal basis of $\Lambda^+$. The existence of a global such basis implies that $\Lambda^+$ is a trivial vector bundle. It is well-known that this implies 
\[
2\chi(M) + 3\tau(M)=0,
\]
where $\chi(M)$ is the Euler characteristic of $M$, and $\tau(M)$ is the signature. This is very restrictive, in particular neither $\mathbb{CP}^2$ (because it is not spin) nor $S^4$ (because $2\chi(S^4) + 3\tau(S^4)=4\not=0$) satisfies these restrictions. 

In the second K\"ahler example, the unit spinor $\psi$ does not need to exist globally, but rather only up to a phase. This means that in this case $M$ does not need to be spin. For the arising 2-forms, only the 2-form $\omega$ is globally-defined and, in particular, defines an almost-complex structure. The arising topological restrictions are that there must exist an integral lift $c_1(M)\in H^2(M,\mathbb{Z})$ of the second Whitney class $w_2(M)$ such that $c_1^2=2\chi(M) + 3\tau(M)$. This is less restrictive than in the ${\rm SU}(2)$-structure case, but the 4-sphere $S^4$ still does not admit this structure. 

\subsection{${\rm SU}(2)$-structures from ${\rm SO}(4)$-structures.} To see how triples $\Sigma^{1,2,3}$ arise in Riemannian geometry in four dimensions, we start from an oriented Riemannian 4-manifold. This defines the notion of which 2-forms are self- and anti-self dual, and so defines the rank 3 subbundle $\Lambda^+\subset \Lambda^2$. We can choose a basis $\Sigma^{1,2,3}$ of $\Lambda^+$ which consists of 2-forms orthonormal $\Sigma^i\wedge \Sigma^j\sim \delta^{ij}$ with respect to the wedge product, normalised such that  $\Sigma^1\wedge \Sigma^1=\Sigma^2\wedge \Sigma^2=\Sigma^3\wedge \Sigma^3$ is a constant multiple of the metric volume form.  To see how the group ${\rm SO}(4)$ acts on this basis, it is most convenient to first consider the spinor lift to ${\rm Spin}(4)$, in which $\R^4=\mathbb{H}$ with the action of $(q_1,q_2)\in {\rm Spin}(4) = {\rm Sp}(1)\times{\rm Sp}(1)$ being as before $x\to q_1 x q_2^{-1}$. Then elements of $\Lambda^+$ are identified with imaginary quaternions $\Lambda^+ = {\rm Im}(\mathbb{H})$, with the action of ${\rm Spin}(4)$ being $\Lambda^+ \ni \Sigma\to q_1 \Sigma q_1^{-1}$. This gives a homomorphism ${\rm Spin}(4)\to {\rm SO}(3)$, with the kernel given by $(\pm 1, q_2)$ which factors through to a homomorphism ${\rm SO}(4)\to {\rm SO}(3)$, with kernel ${\rm SU}(2)$. Thus, ${\rm SO}(4)$ acts on $\Lambda^+$ as ${\rm SO}(3)$. 
 
A related point is that the space $\Lambda^+$ has a natural orientation. Indeed, given $\Sigma^{1,2}\in \Lambda^+$, we can use the metric to raise one of the two indices of $\Sigma^{1,2}$ to convert them into endomorphisms of $\Lambda^1$. We can then compute the commutator $[\Sigma^1,\Sigma^2]$, and lower the index so that the result is again in $\Lambda^+$. We then declare $\Sigma^1,\Sigma^2,[\Sigma^1,\Sigma^2]$ to be the positive orientation. With the ${\rm SO}(4)$ acting on $\Lambda^+$ as ${\rm SO}(3)$, it is orientation-preserving. To complete the choice of the objects $\Sigma^{1,2,3}$, we require them to form an oriented orthonormal basis of $\Lambda^+$. 

An alternative and very convenient viewpoint is to view $\Sigma^i, i=1,2,3$ as a map $\Sigma: \R^3\to \Lambda^+$. Choosing the standard metric on $\R^3$ we require this map to be an isometry. This is the meaning of the formula $\Sigma^i\wedge \Sigma^j\sim \delta^{ij}$, where the object $\delta^{ij}$ is the chosen metric on $\R^3$. Choosing an orientation of $\R^3$, and thus the object $\epsilon^{ijk}$ on $\R^3$, we also require the map $\Sigma: \R^3\to \Lambda^+$ to be orientation-preserving. Thus, the objects $\Sigma^i$ form a suitably normalised (by the volume form) oriented orthonormal frame for the bundle $\Lambda^+$, with ${\rm SO}(4)$ acting on this frame by ${\rm SO}(3)$ rotations. 

\subsection{Recovering metric from an ${\rm SU}(2)$-structure.} We now reverse the above construction, and encode the Riemannian metric by the data $\Sigma^{1,2,3}$ rather than starting from the metric. To do this, we start with an oriented 4-manifold $M$, and a triple of 2-forms $\Sigma^{1,2,3}$ spanning a positive subspace (with respect to the wedge product)  $\Lambda^+\subset \Lambda^2$. We also require $\Sigma^i\wedge \Sigma^j\sim \delta^{ij}$, so that $\Sigma^i$ is an orthonormal basis of $\Lambda^+$ that they span. This already defines a conformal metric together with the volume form, and thus a full Riemannian metric. In addition, we view $\Sigma_p:\R^3\to \Lambda^2 T^*_p M$ as an isometry from $\R^3$ with a fixed metric $\delta^{ij}$ and $\Lambda^2 T^*_p M$ with its wedge product metric. We also fix an orientation of $\R^3$ and require $\Sigma_p:\R^3\to \Lambda^2 T^*_p M$ to be orientation-preserving. This leads to the definition \ref{def:su2}. As we will now show, the metric can then be recovered from the data $\Sigma^i$ by the formula (\ref{urbantke}). The objects $\Sigma^i$ become an oriented orthonormal frame for the bundle $\Lambda^+_\Sigma$ of the metric that $\Sigma$'s define. It is also clear that we can view $\Sigma^i$ is a bundle-valued 2-form, and that no assumption about $\Sigma^{1,2,3}$ being globally-defined is necessary. 

The following dimension count is useful to convince oneself that this encoding of a Riemannian metric on $M$ is possible. The dimension of the space of 2-forms in four dimensions is ${\rm dim}(\Lambda^2)=6$, and so a triple of 2-forms needs 18 numbers to be specified. There are 5 relations in $\Sigma^i\Sigma^j\sim \delta^{ij}$, and so the dimension (per point) of the space of ${\rm SU}(2)$ structures on $M$ is $18-5=13$. This is the same as the dimension of the coset space ${\rm GL}(4,\R)/{\rm SU}(2)$. The fact the ${\rm GL}(4,\R)$ stabiliser of an  ${\rm SU}(2)$ structure on $M$ is one of the two ${\rm SU}(2)$'s in ${\rm SU}(2)\times{\rm SU}(2)/\mathbb{Z}_2 = {\rm SO}(4)\subset {\rm GL}(4,\R)$ follows from the following statement.
\begin{theorem} An ${\rm SU}(2)$ structure on $M$ determines a Riemannian metric $g_\Sigma$ as well as an orientation $\vol_\Sigma$ on $M$. The orientation is defined via
\be
{\rm vol}_\Sigma = \frac{1}{6} \Sigma^i \Sigma^i.
\ee
The metric is determined according to the following formula
\be\label{metric-formula}
g_\Sigma(X,Y) {\rm vol}_\Sigma = - \frac{1}{6} \epsilon^{ijk} i_X \Sigma^i i_Y \Sigma^j \Sigma^k.
\ee
The 2-forms $\Sigma^i$ are self-dual with respect to the Hodge star operator corresponding to $g_\Sigma$, in the orientation ${\rm vol}_\Sigma$. There exists a choice of a co-frame $e^{1,2,3,4}$ such that we have the following canonical expression for $\Sigma^i$
\be\label{Sigma-can}
\Sigma^1 = e^{41}-e^{23}, \quad \Sigma^2 = e^{42}-e^{31}, \quad \Sigma^3=e^{43}-e^{12}.
\ee
\end{theorem}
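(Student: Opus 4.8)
The plan is to reduce the theorem to a single pointwise linear-algebra statement together with a finite computation in a normal form. The linear-algebra statement I would establish is: \emph{at every point $p\in M$, conditions (1) and (2) of Definition~\ref{def:su2} imply the existence of a coframe $e^1,e^2,e^3,e^4$ of $T^*_pM$ in which $\Sigma^1,\Sigma^2,\Sigma^3$ are given exactly by the right-hand sides of} (\ref{Sigma-can}). Granting this, all three conclusions follow from direct (sign-sensitive but entirely routine) wedge-product computations with the standard forms $e^{41}-e^{23},\,e^{42}-e^{31},\,e^{43}-e^{12}$: one checks $\Sigma^i\wedge\Sigma^i=2\,e^{1234}$ for each $i$ and $\Sigma^i\wedge\Sigma^j=0$ for $i\neq j$, so $\tfrac16\delta_{ij}\Sigma^i\wedge\Sigma^j=e^{1234}=\vol_\Sigma$; substituting the canonical triple into the right-hand side of (\ref{metric-formula}), which is manifestly symmetric in $X,Y$, one obtains $\big(\sum_a e^a\otimes e^a\big)(X,Y)\,e^{1234}$, so that $g_\Sigma$ is the inner product making $e^1,\dots,e^4$ an oriented orthonormal coframe; in particular $g_\Sigma$ is positive definite, hence a genuine Riemannian metric, and $\vol_\Sigma$ is its Riemannian volume form; and for this metric and orientation the Hodge star satisfies $\star e^{41}=-e^{23}$, $\star e^{23}=-e^{41}$, and cyclically, whence $\star\Sigma^i=\Sigma^i$. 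Since $\tfrac16\delta_{ij}\Sigma^i\wedge\Sigma^j$ and the right-hand side of (\ref{metric-formula}) are intrinsic in $\Sigma$, these pointwise identities patch to the global statements, and the canonical coframe (\ref{Sigma-can}) can be taken to vary smoothly near each point because $g_\Sigma$ is smooth and the rotation aligning $\Sigma$ to the standard triple is determined smoothly by the data.

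Next I would prove the normal-form statement. Condition (1) exhibits $\Lambda^+_\Sigma:=\langle\Sigma^1,\Sigma^2,\Sigma^3\rangle_p\subset\Lambda^2T^*_pM$ as a three-dimensional subspace on which the (signature $(3,3)$) wedge pairing is positive definite, with $\vol_\Sigma=\tfrac16\delta_{ij}\Sigma^i\wedge\Sigma^j$ the associated volume element. By the classical correspondence between such positive $3$-planes and Riemannian conformal structures with orientation on $T_pM$ (see \cite{Donaldson}), together with the volume element $\vol_\Sigma$, this data is equivalent to an inner product $h_p$ on $T_pM$; one checks along the way that $\vol_\Sigma$ is indeed positive for the induced orientation, that $\Lambda^+_\Sigma$ coincides with the space $\Lambda^+_{h_p}$ of self-dual $2$-forms, and that $\Sigma^1,\Sigma^2,\Sigma^3$ is an orthonormal basis of $\Lambda^+_{h_p}$ (the normalisation matching being $\Sigma^i\wedge\Sigma^j=2\delta^{ij}\vol_\Sigma$). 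Now fix an oriented $h_p$-orthonormal coframe $e^1,\dots,e^4$ and let $\omega^1=e^{41}-e^{23}$, $\omega^2=e^{42}-e^{31}$, $\omega^3=e^{43}-e^{12}$; this is another oriented orthonormal basis of $\Lambda^+_{h_p}$, so $\Sigma^i=R^i{}_j\omega^j$ with $R\in{\rm O}(3)$, and the orientation condition (2) — interpreted via the canonical orientation on $\Lambda^+$ recalled before the theorem — forces $R\in{\rm SO}(3)$. Since the homomorphism ${\rm SO}(4)\to{\rm SO}(3)$ describing the action on $\Lambda^+$ is surjective (also recalled in the excerpt), $R$ is induced by a further orthonormal change of the coframe $e^a$, after which $\Sigma^i=\omega^i$, which is (\ref{Sigma-can}).

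The step I expect to be the main obstacle is not conceptual but one of bookkeeping: verifying that the specific sign conventions of Definition~\ref{def:su2} — the factor $2\delta^{ij}$ in condition (1) and the inequality $\Sigma^3(X,Y)\le 0$ in condition (2) — are exactly what pin down the sign $-\tfrac16$ in (\ref{metric-formula}), the orientation $\vol_\Sigma=\tfrac16\delta_{ij}\Sigma^i\wedge\Sigma^j$, and the particular signs in (\ref{Sigma-can}), rather than, say, the opposite orientation or $\Sigma^3\mapsto-\Sigma^3$. Concretely this reduces to checking that condition (2), evaluated on the canonical triple $\omega^i$, is equivalent to $\omega^3$ appearing with a $+$ sign relative to the chosen orientation of $\langle\omega^1,\omega^2,\omega^3\rangle$ — i.e.\ that with $Y$ solved from $i_X\omega^1=i_Y\omega^2$ one gets $\omega^3(X,Y)=-|X|^2_{h_p}$; everything else in the argument is a finite and mechanical computation with $4$-forms on $\R^4$.
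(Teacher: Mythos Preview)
Your argument is correct and complete in outline; the bookkeeping you flag is indeed the only delicate point, and you have correctly identified that the orientation condition (2) is precisely what fixes $R\in{\rm SO}(3)$ and hence the sign in (\ref{Sigma-can}) and (\ref{metric-formula}).

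The paper's proof, however, takes a different and more self-contained route to the normal form. Rather than invoking the positive-3-plane/conformal-structure correspondence as a black box and then appealing to the surjectivity of ${\rm SO}(4)\to{\rm SO}(3)$, the paper argues directly with the 2-forms: it sets $\Omega=\Sigma^1+\im\Sigma^2$, observes that $\Omega\wedge\Omega=0$ forces $\Omega=u\wedge v$ for complex 1-forms $u,v$, then writes $\Sigma^3$ as a Hermitian form in $u,\bar u,v,\bar v$ with unimodular coefficient matrix (from $\Sigma^3\wedge\Sigma^3=\tfrac12\Omega\wedge\bar\Omega$), and diagonalises this matrix to $\pm\id$ using the residual ${\rm SL}(2,\C)$ freedom in $u,v$; the orientation condition then selects the $+$ sign. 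Your approach is more structural and economical once the cited facts are in hand, and it makes the role of ${\rm SO}(3)={\rm SO}(4)/{\rm SU}(2)$ transparent; the paper's approach is more elementary in that it builds the coframe from scratch without quoting the conformal correspondence, and along the way exhibits explicitly that the stabiliser of the triple is the ${\rm SU}(2)$ acting on $(u,v)$.
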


\begin{proof} Let us form $\Omega= \Sigma^1+\im \Sigma^2$. The algebraic relations satisfied by $\Sigma^i$ imply that $\Omega\Omega=0$, which then means that it is decomposable. Let us denote by $u,v$ some complex one-forms such that $\Omega = u\wedge v$. Because $\Omega\bar{\Omega}\not = 0$, it is clear that $u,v,\bar{u},\bar{v}$ span $\Lambda^1(M)$. The real 2-form $\Sigma^3$ satisfies $\Sigma^3\Omega=\Sigma^3\bar{\Omega}=0$, and is therefore of the form
\be
\Sigma^3 = \frac{1}{2\im} \alpha u\wedge\bar{u} +\frac{1}{2\im} \beta v\wedge\bar{v} + \frac{1}{2\im}\gamma u\wedge\bar{v} +\frac{1}{2\im}\bar{\gamma} v\wedge\bar{u}, \quad \alpha,\beta \in \R, \gamma \in \C.
\ee
Alternatively
\be\label{sigma3}
\Sigma^3 = \frac{1}{2\im}\left(\begin{array}{cc} u & v \end{array}\right) \left(\begin{array}{cc}  \alpha & \gamma \\ \bar{\gamma} & \beta \end{array}\right) \left( \begin{array}{c} \bar{u} \\ \bar{v}\end{array}\right).
\ee
Moreover
\be
\Sigma^3\Sigma^3 = \frac{1}{2}(\alpha\beta -  |\gamma|^2) u\wedge v\wedge \bar{u}\wedge\bar{v}.
\ee
But we must have $\Sigma^3\wedge\Sigma^3=(1/2)\Omega\wedge\bar{\Omega}$, and so $\alpha\beta -  |\gamma|^2=1$.
At the same time, the one-forms $u,v$ are defined modulo
\be\label{sl2-transform}
\left(\begin{array}{cc} u & v \end{array}\right) \to \left(\begin{array}{cc} u & v \end{array}\right) \left(\begin{array}{cc} a & b \\ c & d \end{array}\right), \qquad a,b,c,d\in \C, ad-bc=1.
\ee
The Hermitian unimodular matrix that appears in (\ref{sigma3}) can always be brought by a ${\rm SL}(2,C)$ transformation into the form $\pm \id$. This means that there is a choice of $u,v$ such that 
\be\label{canonical-form}
\Omega = \Sigma^1+\im \Sigma^2 = u\wedge v, \qquad \Sigma^3 = \pm\left(\frac{1}{2\im} u\wedge\bar{u} + \frac{1}{2\im} v\wedge\bar{v}\right),
\ee
where both signs in the expression for $\Sigma^3$ are possible. The 1-forms $u,v$ are defined modulo an ${\rm SU}(2)$ transformation, which leaves $\Sigma^3, \Omega$ invariant. This shows that the triple of 2-forms $\Sigma^i$ satisfying $\Sigma^i \Sigma^j\sim \delta^{ij}$ defines the Riemannian signature metric 
\be
g_\Sigma = |u|^2 + |v|^2,
\ee
as well as the canonical orientation of $M$ given by $\Omega\wedge\bar{\Omega} = u\wedge v\wedge \bar{u}\wedge\bar{v}$. 

We now choose a real basis of co-vectors
\be
u= e^4-\im e^3, \qquad v= e^1 +\im e^2,
\ee
so that $\Sigma^i$ take the following form
\be
\Sigma^1 = e^{41}-e^{23}, \quad \Sigma^2= e^{42}-e^{31},
\ee
while
\be
\Sigma^3= \pm ( e^{43} - e^{12}),
\ee
where the notation is $e^{I\ldots J} = e^I \wedge \ldots \wedge e^{J}$. However, we have not yet used our orientation condition on $\Sigma^{1,2,3}$ It is easy to see that it selects the plus sign in the expression for $\Sigma^3$
\be
\Sigma^3= e^{43} - e^{12}.
\ee
The metric is
\be
g_\Sigma = \sum_{I=1}^4 (e^I)^2,
\ee
and the orientation determined by $\Sigma^i$ is $e^{1234}$. It is clear that $\Sigma^i$ are self-dual 2-forms with respect to the Hodge star corresponding to the metric $g_\Sigma$, in the orientation $e^{1234}$. The formula (\ref{metric-formula}) can now be checked by a verification.
\end{proof}

We will also give an alternative explicit expression for the metric, in the spirit of the 8D formula in \cite{Spiro-Spin7}, is as follows.
\begin{theorem} The expression 
\be\label{new-metric}
 - \frac{1}{2} \frac{ (\epsilon^{ijk} i_X \Sigma^i i_X \Sigma^j i_X \Sigma^k)(e_1, e_2, e_3)}{ (i_X \Sigma^i \Sigma^i) (e_1, e_2, e_3)},
\ee
where $e_{1,2,3}$ is an arbitrary triple of vectors that together with $X$ span $TM$, is independent of the choice of $e_i$. It is homogeneity degree two in $X$, and equals the metric pairing $g_\Sigma(X,X)$. 
\end{theorem}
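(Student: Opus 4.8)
The plan is to use the canonical-frame description of the $\mathrm{SU}(2)$-structure established in the previous theorem. By that result, at each point $p$ we may choose a co-frame $e^{1,2,3,4}$ so that $\Sigma^i$ take the canonical form (\ref{Sigma-can}), and the metric is $g_\Sigma = \sum_I (e^I)^2$ with orientation $e^{1234}$. Since both the numerator and denominator in (\ref{new-metric}) are built purely from the $\Sigma^i$ and contractions with vectors, it suffices to evaluate the expression in this frame and check that it reproduces $g_\Sigma(X,X) = \sum_I (X^I)^2$. The independence of the choice of $e_{1,2,3}$ will follow automatically from this, but it is cleaner to first argue independence abstractly and only then compute.

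First I would establish independence of the auxiliary triple $e_{1,2,3}$ as follows. Fix $X$ with $i_X\Sigma^i\Sigma^i \neq 0$ (this holds since $i_X\Sigma^i\Sigma^i$ is, up to a constant, $i_X$ of the volume form, hence nonzero for $X\neq 0$, and for $X=0$ the statement is trivial). The denominator $i_X\Sigma^i\wedge\Sigma^i \in \Lambda^3 T^*_pM$ is a fixed nonzero $3$-form $\mu$; the numerator $\epsilon^{ijk} i_X\Sigma^i \wedge i_X\Sigma^j \wedge i_X\Sigma^k$ is a fixed $3$-form $\nu$. The ratio $\nu(e_1,e_2,e_3)/\mu(e_1,e_2,e_3)$ is independent of $(e_1,e_2,e_3)$ precisely because any two nonzero $3$-forms on a $4$-dimensional space, when both contracted with the \emph{same} triple, have a ratio that is constant whenever it is defined: indeed $\mu$ nonzero means $\mu = \iota_Z(e^{1234})$ for some nonzero vector $Z$, and $\ker\mu$ (as a function of triples) forces $\nu = c\,\mu$ as $3$-forms iff the ratio is constant; one verifies $\nu$ is proportional to $\mu$ directly. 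The key algebraic fact to check is thus: $\epsilon^{ijk} i_X\Sigma^i \wedge i_X\Sigma^j \wedge i_X\Sigma^k = -2\, g_\Sigma(X,X)\, i_X\Sigma^\ell\wedge\Sigma^\ell / (\text{the constant relating } \Sigma^\ell\Sigma^\ell \text{ to } \vol)$, i.e. the two $3$-forms are proportional with proportionality constant $-2 g_\Sigma(X,X)$ after accounting for the normalization $\tfrac16\Sigma^\ell\Sigma^\ell = \vol_\Sigma$.

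The computational core is then a direct evaluation in the canonical frame. Write $X = X^I e_I$ (dual basis). One computes $i_X\Sigma^1 = X^4 e^1 - X^1 e^4 - X^2 e^3 + X^3 e^2$ and cyclically for $\Sigma^{2,3}$, then forms the triple wedge $\epsilon^{ijk} i_X\Sigma^i \wedge i_X\Sigma^j \wedge i_X\Sigma^k$ and the wedge $i_X\Sigma^\ell \wedge \Sigma^\ell$. Both are multiples of $i_X(e^{1234})$; comparing coefficients gives the claimed identity. Homogeneity degree two in $X$ is manifest: the numerator is cubic and the denominator linear in $X$. The main obstacle is purely bookkeeping — keeping the signs and the $\epsilon^{ijk}$ contractions straight through a cubic wedge computation — but there is no conceptual difficulty once the canonical form (\ref{Sigma-can}) is invoked. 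A mild subtlety worth a sentence: one should note that the value $g_\Sigma(X,X)$ obtained is frame-independent because $g_\Sigma$ itself is (by the previous theorem the canonical frame is unique up to an $\mathrm{SO}(4)$ rotation, under which $\sum_I(X^I)^2$ is invariant), so the pointwise formula patches to a global one for bundle-valued $\Sigma$.
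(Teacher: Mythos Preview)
Your proposal is correct and follows essentially the same strategy as the paper: establish independence of the auxiliary triple $(e_1,e_2,e_3)$, then evaluate in the canonical frame (\ref{Sigma-can}). Two minor differences in execution are worth noting. First, the paper proves independence by a direct change-of-basis argument $(e_i)' = \kappa_i X + \lambda_i{}^j e_j$, observing that the $\kappa_i$ terms drop out (since $i_X$ annihilates both the numerator and denominator 3-forms) while both scale by $\det(\lambda)$; this is cleaner than your route through ``$\nu \propto \mu$ as 3-forms'' and sidesteps the mis-stated intermediate claim that the ratio of \emph{any} two nonzero 3-forms is constant --- it is not, and proportionality is precisely what must be checked. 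You do ultimately supply the right reason (both are multiples of $i_X(e^{1234})$), so the argument closes, but the presentation should be tightened. Second, having secured independence, the paper checks the value only for a single frame vector, relying implicitly on the $\mathrm{SO}(4)$-invariance of both sides, whereas you propose computing for a general $X = X^I e_I$; both are valid, the paper simply trades a short symmetry observation for less explicit algebra.
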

\begin{proof} The homogeneity degree in $X$ is obvious. To prove independence of $e^i$, let us take some other triple of vectors related to $X,e^i$ as 
\[
(e_i)' = \kappa_i X + \lambda_i{}^j e_j.
\]
From $i_X \Sigma^i i_X \Sigma^i =0$ it follows that the denominator is independent of $\kappa_i$. The numerator is independent of $\kappa_i$ because the insertion of two factors of $X$ into $\Sigma^i$ vanishes. To demonstrate independence of $\lambda_i{}^j$ we note 
\[
(\epsilon^{ijk} i_X \Sigma^i i_X \Sigma^j i_X \Sigma^k)((e^1)', (e^2)', (e^3)')= 
{\rm det}(\lambda) (\epsilon^{ijk} i_X \Sigma^i i_X \Sigma^j i_X \Sigma^k)(e^1, e^2, e^3).
\]
Similarly
\[
(i_X \Sigma^i \Sigma^j)((e^1)', (e^2)', (e^3)') = {\rm det}(\lambda) (i_X \Sigma^i \Sigma^j)(e^1, e^2, e^3).
\]
The statement of independence of choice of $e^i$ follows. The statement that this expression computes the metric pairing $g_\Sigma(X,X)$ follows by computing it for one of the frame vectors when $\Sigma^i$ is given by its canonical expression (\ref{Sigma-can}) in the co-frame basis. 
\end{proof}

The statement that the ${\rm GL}(4,\R)$ stabiliser of a triple $\Sigma^i$ satisfying $\Sigma^i\Sigma^j\sim \delta^{ij}$ is ${\rm SU}(2)$ now follows from the facts established above. First, given that $\Sigma^i$ define a Riemannian signature metric, their ${\rm GL}(4,\R)$ stabiliser is contained in ${\rm O}(4)$. Further, we have explicitly determined this stabiliser to be the ${\rm SU}(2)$ that mixes the complex one-forms $u,v$ and leaves $\Sigma^{1,2,3}$ invariant. The other ${\rm SU}(2)$ acts on $\Sigma^{1,2,3}$ as ${\rm SO}(3)$, by mixing them. 

This discussion establishes that we can think about ${\rm SO}(4)$ structures as ${\rm SO}(3)$ covariant ${\rm SU}(2)$-structures, by introducing an ${\rm SO}(3)$ vector bundle of vector-valued 2-forms, of which $\Sigma^{1,2,3}$ is a frame.

\section{Decomposition of $E$-valued differential forms}
\label{sec:decomp}

The technical tool we need to prove Theorem \ref{thm:nabla-sigma} is the decomposition of $E$-valued differential forms on $M$ into representations of the structure group in question. Such a technique is standard in the literature, see e.g. \cite{Spiro-G2} for the treatment in the $G_2$-case. As we previously explained, we view ${\rm SU}(2)$-structures so that there is no reduction of the structure group ${\rm SO}(4)$, because 2-forms $\Sigma^i$ are considered as bundle-valued objects, with ${\rm SO}(3)$ acting on them. So, the relevant representation theory is that of ${\rm SO}(4)$. This is what makes our treatment different from that in \cite{Fowdar}. While many of the formulas in this reference are ${\rm SO}(3)$ covariant, this plays no role in its considerations. In contrast, this fact will be heavily used in our constructions, greatly simplifying the resulting formulas. We will also see that, in our ${\rm SO}(3)$-covariant treatment, the intrinsic torsion decomposes into just two irreducible parts, as compared to many such parts in \cite{Fowdar}. 

\subsection{${\rm SO}(4)$ representation theory.} For any $G$-structure, the intrinsic torsion of a $G$-structure is an object that measures the failure of this $G$-structure to be integrable. From general principles, it follows that the intrinsic torsion is an object that takes values in $T^*M \otimes \mathfrak{g}^\perp$, see for example a discussion in the subsection 4.2 of \cite{Bryant-G2}. Here $\mathfrak{g}^\perp$ is understood as the associated vector bundle $\mathfrak{g}^\perp \equiv P\times_G \mathfrak{g}^\perp$, where $P$ is the principal $G$-bundle defining the $G$-structure. At the same time, the intrinsic torsion should be determinable from the covariant derivative (computed with respect to the Levi-Civita connection) of the tensors that define the $G$-structure. 

In our case, $\mathfrak{g}^\perp=\mathfrak{su}(2)=\R^3$, and so the intrinsic torsion must be an object with values in $T^*M \otimes E\equiv \Lambda^1(M)\otimes E$. The tensor that defines an ${\rm SU}(2)$-structure takes values in $\Lambda^2 T^*M \otimes E\equiv \Lambda^2(M) \otimes E$, and so does its covariant derivative $\nabla_X \Sigma$ in any direction $X\in TM$. For this reason, we need to understand the decomposition of the spaces $\Lambda^1(M) \otimes E, \Lambda^2 (M) \otimes E$, into irreducible representations of ${\rm SO}(4)={\rm SU}(2)\times{\rm SU}(2)/\Z_2$. 

Irreducible representations of ${\rm SU}(2)$ are the spin $k/2$ representations that we denote by $S^k$. They are of dimension ${\rm dim}(S^k) = k+1$. As we have previously discussed, there are two different ${\rm SU}(2)$'s in the game. One ${\rm SU}(2)$ is the group with respect to which the 2-forms $\Sigma^i$ are invariant. We will choose to denote this copy of ${\rm SU}(2)$ by ${\rm SU}_-(2)$, and the corresponding representations by $S^k_-$. The other ${\rm SU}(2)$ is one that acts on $\Sigma^i$ as in the adjoint representation. We will denote it by ${\rm SU}_+(2)$, and the corresponding representations by $S_+^k$. We then have the following standard facts
\be
\Lambda^1(M) = S_+\otimes S_-, \qquad \Lambda^2(M) = S_+^2 \oplus S_-^2, \qquad E = S_+^2.
\ee
Here we use the same notation $S_\pm$ for the spinor bundles and for their typical fibres, and interpret all representation-theoretic statements fibrewise.
This results in the following decomposition of $\Lambda^1(M)\otimes E, \Lambda^2(M)\otimes E$ into irreducibles 
\be\label{E-forms-decomp}
\Lambda^1(M) \otimes E= (S_+^3\otimes S_-) \oplus (S_+\otimes S_-), \\ \nonumber
\Lambda^2(M) \otimes E = S_+^4 \oplus S_+^2 \oplus \R \oplus (S_+^2 \otimes S_-^2).
\ee
Note that these are standard statements about decomposition of various representations of ${\rm Spin}(4)$ into irreducibles. These fibrewise decompositions induce corresponding decompositions of the associated bundles.

\subsection{Algebra of $\Sigma$'s}

To obtain explicit formulas for the irreducible parts of $E$-valued differential forms, we need some identities satisfied by the 2-forms $\Sigma^i$. We will be using the index notation, similar to e.g. \cite{Spiro-G2}, which is most suited for the type of calculations that need to be done. 

First, one of the two indices of these differential forms can be raised with the metric (that they define), to convert these objects into those in ${\rm End}(T^*M)$. We then have a triple of such endomorphisms of the tangent bundle, satisfying the algebra of the imaginary quaternions
\be\label{algebra}
\Sigma^i_{\mu}{}^\alpha \Sigma^j_\alpha{}^\nu = - \delta^{ij} \delta_\mu{}^\nu + \epsilon^{ijk} \Sigma^k_\mu{}^\nu.
\ee
There are also useful relations
\be\label{sigma-sigma}
\Sigma^i_{\mu\nu} \Sigma^i_{\rho\sigma}= g_{\mu\rho} g_{\nu\sigma} - g_{\mu\sigma} g_{\nu\rho} + \epsilon_{\mu\nu\rho\sigma}, \\ 
\label{sigma-sigma-epsilon}
\epsilon^{ijk} \Sigma^j_{\mu\nu} \Sigma^k_{\rho\sigma}= -2\Sigma^i_{[\mu|\rho|} g_{\nu]\sigma} + 2\Sigma^i_{[\mu|\sigma|} g_{\nu]\rho} .
\ee
These can be proven using the expression (\ref{Sigma-can}) for $\Sigma^i$ in terms of a co-frame. 

\subsection{Decomposition of $\Lambda^1(M) \otimes E$}

Given an ${\rm SU}(2)$ structure $\Sigma^i$, we can define the following operator acting on $\Lambda^1(M) \otimes E$
\be\label{J-Sigma}
\Lambda^1(M) \otimes E\ni A^i_\mu \to J_1(A)_\mu^i := \epsilon^{ijk} \Sigma^j_\mu{}^\alpha A^k_\alpha.
\ee
A simple calculation using (\ref{algebra}) shows that 
\be
J_1^2 = 2\mathbb{I} + J_1.
\ee
This means that the eigenvalues of $J_\Sigma$ are $2, -1$. The eigenspaces of $J_1$ are precisely the irreducibles appearing in the first line in (\ref{E-forms-decomp}). It is also easy to check that objects of the form
\be
\xi^\alpha \Sigma^i_{\alpha\mu} \in \Lambda^1(M)\otimes E
\ee
are eigenvectors of eigenvalue $2$. We then have the following characterisation 
\be\label{lambda-E-4}
(\Lambda^1(M)\otimes E)_4 = S_+\otimes S_- =\{ \xi^\alpha \Sigma^i_{\alpha\mu}, \xi\in TM\}.
\ee
The space 
\be\label{lambda-E-8}
(\Lambda^1(M)\otimes E)_8 = S^3_+\otimes S_- 
\ee
can then be characterised as the orthogonal complement of (\ref{lambda-E-4}) in $\Lambda^1(M)\otimes E$. The subscripts $4,8$ in $(\Lambda^1(M)\otimes E)_{4,8}$ denote the dimensions of the irreducible components. Note that the fact that there are only two irreducible components of the intrinsic torsion is what makes our treatment different from that in \cite{Fowdar}. This is because we are using the decomposition with respect to ${\rm SO}(4)$ rather than with ${\rm SU}(2)$. As a result, only two irreducible representations appear. 

\subsection{${\rm GL}(4)$ orbit of $\Sigma$ in $\Lambda^2(M) \otimes E$}

To characterise some of the spaces appearing in the decomposition of $\Lambda^2(M) \otimes E$ we first consider the ${\rm GL}(4)$ orbit of the 2-forms $\Sigma^i$. Thus, we consider $E$-valued 2-forms of the form $h_{[\mu}{}^\alpha \Sigma^i_{ |\alpha|\nu]} $. Decomposing $h_{\mu\nu}\in{\rm GL}(4)$ into its symmetric and anti-symmetric parts, and noting that the anti-symmetric part is valued in $\Lambda^2(M)=S_+^2 \oplus S_-^2$, we get the following list of irreducibles appearing  
\be
h_{[\mu}{}^\alpha \Sigma^i_{ |\alpha|\nu]} \in S_+^2 \oplus \R \oplus (S_+^2 \otimes S_-^2) \subset \Lambda^2(M) \otimes E,
\ee
which is all spaces in the second line of (\ref{E-forms-decomp}) apart from $S_+^4$. These irreducibles in $\Lambda^2(M) \otimes E$ can then be characterised as the images of the map $h_{\mu\nu}\to h_{[\mu}{}^\alpha \Sigma^i_{ |\alpha|\nu]} $. Here and below the square brackets on cotangent space indices denote anti-symmetrisation, while the round brackets denote symmetrisation
\[
A_{[\mu} B_{\nu]} = \frac{1}{2}( A_\mu B_\nu - A_\nu B_\mu), \qquad A_{(\mu} B_{\nu)} = \frac{1}{2}( A_\mu B_\nu + A_\nu B_\mu).
\]

One can also act on the index $i$ of $\Sigma^i$ 2-forms with a ${\rm GL}(3)$ transformation, i.e., consider the orbit of $E$-valued 2-forms of the form $h^{ij} \Sigma^j_{\mu\nu}$. Decomposing the matrix $h^{ij}$ into symmetric and anti-symmetric parts, one finds the following list of irreducibles
\be
h^{ij} \Sigma^j_{\mu\nu} \in S_+^4 \oplus S_+^2 \oplus \R.
\ee

It is not hard to show that, in the opposite direction, given an object $B^i_{\mu\nu}\in \Lambda^2(M) \otimes E$, its irreducible parts can be extracted as follows
\be\label{irreducibles}
B^{(i}_{\alpha\beta} \Sigma^{j)\alpha\beta} - \frac{1}{3} \delta^{ij} B^{k}_{\alpha\beta} \Sigma^{k\alpha\beta} \in S_+^4, \\ \nonumber
\epsilon^{ijk} B^j_{\alpha\beta} \Sigma^{k\alpha\beta} \in S_+^2, \\ \nonumber
B^{k}_{\alpha\beta} \Sigma^{k\alpha\beta} \in \R, \\ \nonumber
B^i_{(\mu|\alpha|} \Sigma^{i\alpha}{}_{\nu)} + \frac{1}{4} g_{\mu\nu} B^i_{\alpha\beta} \Sigma^{i\alpha\beta} \in S_+^2 \otimes S_-^2 .
\ee

\subsection{Decomposition of $\Lambda^2(M) \otimes E$} We can also describe the irreducible subspaces of $\Lambda^2(M) \otimes E$ as eigenspaces of a certain operator in $E$-valued 2-forms, similar to how we used $J_1$ to decompose $\Lambda^1\otimes E$. Let us introduce the following operator
\be\label{J2}
J_2: \Lambda^2\otimes E\to \Lambda^2\otimes E, \qquad J_2(B)_{\mu\nu}^i = \epsilon^{ijk} \Sigma^j_{[\mu}{}^\alpha B_{|\alpha|\nu]}^k, \qquad B_{\mu\nu}^i\in \Lambda^2\otimes E.
\ee
A computation gives
\be
J_2^2(B)_{\mu\nu}^i = \frac{1}{2} B_{\mu\nu}^i + \frac{1}{2}\epsilon_{\mu\nu}{}^{\alpha\beta} B^i_{\alpha\beta} + \frac{1}{2} J_2(B)_{\mu\nu}^i + \frac{1}{2} \Sigma^i_{[\mu}{}^\alpha \Sigma^j_{\nu]}{}^\beta B^j_{\alpha\beta}, \\ \nonumber
J_2^3(B)_{\mu\nu}^i =  \frac{1}{2}\epsilon_{\mu\nu}{}^{\alpha\beta} B^i_{\alpha\beta} + 2 J_2(B)_{\mu\nu}^i +  \Sigma^i_{[\mu}{}^\alpha \Sigma^j_{\nu]}{}^\beta B^j_{\alpha\beta}, \\ \nonumber
J_2^4(B)_{\mu\nu}^i =  \frac{1}{2} B_{\mu\nu}^i+ \frac{3}{2}\epsilon_{\mu\nu}{}^{\alpha\beta} B^i_{\alpha\beta} + \frac{5}{2} J_2(B)_{\mu\nu}^i +  \frac{5}{2} \Sigma^i_{[\mu}{}^\alpha \Sigma^j_{\nu]}{}^\beta B^j_{\alpha\beta}.
\ee
This implies
\be
J_2^4 - 2J_2^3-J_2^2 + 2 J_2=0 \qquad \text{or} \qquad J_2 (J_2-2)(J_2-1)(J_2+1)=0,
\ee
which implies that the eigenvalues of $J_2$ are $2,1,-1,0$. 

To characterise the eigenspaces we consider an arbitrary $3\times 3$ matrix 
\[
M^{ij}=M_s^{ij}+M_a^{ij}, \qquad M_s^{ij}=M_s^{(ij)}, \qquad M_a^{ij} = M_a^{[ij]},
\] 
and compute
\be
J_2(M^{ij} \Sigma^j_{\mu\nu}) = {\rm Tr}(M) \Sigma^i_{\mu\nu}-M^{ji} \Sigma^j_{\mu\nu} = {\rm Tr}(M) \Sigma^i_{\mu\nu}-M_s^{ij} \Sigma^j_{\mu\nu}+ M_a^{ij} \Sigma^j_{\mu\nu}.
\ee
This means that the eigenspace of $J_2$ of eigenvalue $2$ is spanned by multiples of $\Sigma^i_{\mu\nu}$. The eigenspace of eigenvalue $1$ is $S_+^2$ spanned by $M_a^{ij} \Sigma^j_{\mu\nu}$. The eigenspace of eigenvalue $-1$ is $S_+^4$ spanned by $M_s^{ij} \Sigma^j_{\mu\nu}$ with ${\rm Tr}(M_s)=0$. 

We can also apply the operator $J_2$ to objects of the type $h_{[\mu}{}^\alpha \Sigma^i_{|\alpha|\nu]}$. We get
\be
J_2(h_{[\mu}{}^\alpha \Sigma^i_{|\alpha|\nu]}) = \frac{1}{2} h_\alpha{}^\alpha \Sigma^i_{\mu\nu}.
\ee
This means that the space $S_+^2\otimes S_-^2$ spanned by $h_{[\mu}{}^\alpha \Sigma^i_{|\alpha|\nu]}$ with tracefree $h_{\mu\nu}$ is eigenspace of $J_2$ of eigenvalue $0$.

All in all, we get
\be
\Lambda^2\otimes E= (\Lambda^2\otimes E)_5\oplus (\Lambda^2\otimes E)_3\oplus (\Lambda^2\otimes E)_1\oplus (\Lambda^2\otimes E)_9.
\ee
The last space here is $(\Lambda^2\otimes E)_9= \Lambda^-\otimes E$.

\section{Intrinsic torsion and the associated curvature}
\label{sec:torsion}

\subsection{Intrinsic torsion}

From general principles, it follows that the torsion of a $G$-structure should be described by a an object valued in $\Lambda^1(M)\otimes \mathfrak{g}^\perp$, which in our case is $\Lambda^1(M)\otimes E$. At the same time, the intrinsic torsion quantifies non-integrability of the $G$-structure, and thus the failure of the tensors defining this $G$-structure to be parallel with respect to the Levi-Civita connection. Thus, we expect that $\nabla_\mu \Sigma^i_{\alpha\beta}$ should be expressible via the intrinsic torsion $A^i_\mu \in \Lambda^1(M)\otimes E$. The following proposition is a statement to this effect
\begin{theorem} There exists a set of objects $A_\mu^i\in \Lambda^1(M)\otimes E$ such that
\be\label{torsion}
\nabla_\mu \Sigma^i_{\alpha\beta} = - \epsilon^{ijk} A^j_\mu \Sigma^k_{\alpha\beta}.
\ee
\end{theorem}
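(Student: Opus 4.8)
The plan is to show that, for each fixed direction $\mu$, the $E$-valued $2$-form $\nabla_\mu\Sigma^i_{\alpha\beta}$ lies in the summand $S_+^2\subset\Lambda^2(M)\otimes E$ of the decomposition (\ref{E-forms-decomp}). By the analysis of Section~\ref{sec:decomp}, that summand is exactly $\{\epsilon^{ijk}A^j\Sigma^k: A^j\in\R^3\}$: it is the eigenspace of $J_2$ with eigenvalue $1$, spanned by $M_a^{ij}\Sigma^j$ with $M_a$ an antisymmetric $3\times3$ matrix, and every such matrix is of the form $M_a^{ij}=\epsilon^{ijk}A^k$. So the entire task reduces to showing that the three complementary components of $\nabla_\mu\Sigma^i$, namely $S_+^4$, $\R$ and $S_+^2\otimes S_-^2$, all vanish. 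Once this is done, the claimed identity $\nabla_\mu\Sigma^i_{\alpha\beta}=-\epsilon^{ijk}A^j_\mu\Sigma^k_{\alpha\beta}$ follows, the sign and normalisation being conventional, and letting $\mu$ vary assembles the $A^j_\mu$ into an object $A^j\in\Lambda^1(M)\otimes E$.

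\textbf{Killing the anti-self-dual part.} Because the Levi-Civita connection $\nabla$ of $g_\Sigma$ is metric-compatible and annihilates the volume form $\vol_\Sigma$, it commutes with the Hodge star on $2$-forms and hence preserves the splitting $\Lambda^2=\Lambda^+\oplus\Lambda^-$. Each $\Sigma^i$ is self-dual (this is the content of equation (\ref{Sigma-can}) in the metric-recovery theorem), so $\nabla_\mu\Sigma^i$ is again self-dual; therefore its $\Lambda^-(M)\otimes E=S_+^2\otimes S_-^2$ component vanishes.

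\textbf{Killing the $S_+^4$ and $\R$ parts.} Differentiate the orthonormality relation $\Sigma^i\wedge\Sigma^j=2\delta^{ij}\vol_\Sigma$ of Definition~\ref{def:su2} and use $\nabla\vol_\Sigma=0$ to get $\nabla_\mu\Sigma^i\wedge\Sigma^j+\Sigma^i\wedge\nabla_\mu\Sigma^j=0$, i.e.\ the symmetric-in-$(i,j)$ part of $\nabla_\mu\Sigma^i\wedge\Sigma^j$ vanishes for every $\mu$. Since the $\Sigma^j$ are self-dual, $\Sigma^i\wedge\eta$ is a positive multiple of $(\tfrac12\Sigma^i_{\alpha\beta}\eta^{\alpha\beta})\vol_\Sigma$ for any self-dual $\eta$, and more generally $\Sigma^i\wedge\eta\propto(\Sigma^i_{\alpha\beta}\eta^{\alpha\beta})\vol_\Sigma$ after using self-duality of $\Sigma^i$; hence the vanishing statement is equivalent to $\Sigma^{(i}_{\alpha\beta}\,\nabla_\mu\Sigma^{j)\,\alpha\beta}=0$. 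By the extraction formulas (\ref{irreducibles}), the quantity $B^{(i}_{\alpha\beta}\Sigma^{j)\alpha\beta}$ detects precisely the $S_+^4$ (symmetric tracefree) and $\R$ (trace) components of an $E$-valued $2$-form $B$, so both of these components of $\nabla_\mu\Sigma^i$ vanish. Combined with the previous step, only the $S_+^2$ component survives, which is what was needed.

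The step I expect to require the most care is the verification that the self-dual orthonormality relation eliminates \emph{exactly} $S_+^4\oplus\R$ and nothing more: one must check that the contraction map $B\mapsto B^{(i}_{\alpha\beta}\Sigma^{j)\alpha\beta}$ is injective on $S_+^4\oplus\R$ and annihilates both $S_+^2$ and $\Lambda^-(M)\otimes E$, so that no hidden component of $\nabla_\mu\Sigma^i$ escapes the argument, and that the two reductions are genuinely independent (the orthonormality identity being invoked on what self-duality has already cut down to $\Lambda^+(M)\otimes E$). These are short $\mathrm{SO}(4)$-representation-theoretic computations that follow from the $\Sigma$-algebra identities (\ref{algebra})–(\ref{sigma-sigma-epsilon}) and the eigenspace description of $J_2$, but they are the crux of the argument; everything else is bookkeeping.
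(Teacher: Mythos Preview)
Your proposal is correct and follows essentially the same strategy as the paper: eliminate the $S_+^4$, $\R$, and $S_+^2\otimes S_-^2$ components of $\nabla_\mu\Sigma^i$ so that only the $S_+^2$ piece survives. The $S_+^4\oplus\R$ step is literally the paper's computation $2\Sigma^{(i|\alpha\beta|}\nabla_\mu\Sigma^{j)}_{\alpha\beta}=\nabla_\mu(\Sigma^{i\alpha\beta}\Sigma^j_{\alpha\beta})=4\nabla_\mu\delta^{ij}=0$.

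The one genuine difference is how you kill the $S_+^2\otimes S_-^2=\Lambda^-\otimes E$ component. The paper uses the extraction formula (\ref{irreducibles}) and the algebraic identity $\Sigma^i_{\mu}{}^\alpha\Sigma^i_{\nu\alpha}=3g_{\mu\nu}$ from (\ref{sigma-sigma}) to compute $2\Sigma^i_{(\mu}{}^\alpha\nabla_\rho\Sigma^i_{\nu)\alpha}=\nabla_\rho(\Sigma^i_{\mu}{}^\alpha\Sigma^i_{\nu\alpha})=3\nabla_\rho g_{\mu\nu}=0$. You instead argue geometrically: $\nabla$ is metric and volume compatible, hence commutes with the Hodge star, hence preserves $\Lambda^+$, and since $\Sigma^i\in\Lambda^+$ so is $\nabla_\mu\Sigma^i$. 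Your route is cleaner and avoids invoking the $\Sigma$-algebra, at the cost of using the (already-established) self-duality of $\Sigma^i$; the paper's route stays purely at the level of index identities and makes the parallel between the two vanishing steps more visible. Either is fine, and the representation-theoretic checks you flag at the end are exactly the content of (\ref{irreducibles}) together with the $J_2$-eigenspace description.
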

\begin{proof} Comparing the right-hand side of the formula (\ref{torsion}) with the set of objects that appear in (\ref{irreducibles}), we see that the statement is that there are no $S_+^4, \R, S_+^2 \otimes S_-^2$ irreducible components in $X^\mu \nabla_\mu \Sigma^i_{\alpha\beta}\in \Lambda^2(M)\otimes E, \forall X^\mu \in TM$. The $S_+^4, \R$ components are extracted as
\be
2\Sigma^{(i|\alpha\beta|} \nabla_\mu \Sigma^{j)}_{\alpha\beta} = \nabla_\mu (\Sigma^{i|\alpha\beta|}  \Sigma^{j}_{\alpha\beta}) = 4 \nabla_\mu \delta^{ij} = 0.
\ee
Here we have used the fact that the operation of raising-lowering of the indices commutes with $\nabla_\mu$. Similarly, the $S_+^2 \otimes S_-^2$ component is extracted as
\be
2\Sigma^i_{(\mu}{}^\alpha \nabla_\rho \Sigma^i_{\nu)\alpha} = \nabla_\rho \Sigma^i_{\mu}{}^\alpha  \Sigma^i_{\nu\alpha} = 3 \nabla_\rho g_{\mu\nu} = 0.
\ee
This shows that no undesired components are present in $\nabla_\mu \Sigma^i_{\alpha\beta}$ and that (\ref{torsion}) holds. 
\end{proof} 
Below, in equaiton (\ref{A-gauge-transf}), we verify that the objects $A^i_\mu$ transform under ${\rm SO}(3)$ transformations of $\Sigma$'s as an ${\rm SO}(3)$-connection. This proves Theorem \ref{thm:nabla-sigma} of the Introduction. The fact that the intrinsic torsion assembles itself into an ${\rm SO}(3)$ connection does not have analogs in the case of general G-structures, this is the property of $H$-covariant $G$-structures.

\subsection{Bianchi identity}

Establishing a version of the formula (\ref{torsion}) is one of the more laborious parts of the analysis of a non-integrable $G$-structure. The rest of the analysis is much more algorithmic. We take another covariant derivative and anti-symmetrise to get
\be\label{bianchi}
2 R_{\mu\nu[\rho}{}^{\alpha} \Sigma^i_{|\alpha| \sigma]} = 2 \nabla_{[\mu} \nabla_{\nu]} \Sigma^i_{\rho\sigma} = - 2\epsilon^{ijk} (\nabla_{[\mu} A^j_{\nu]} \Sigma^k_{\rho\sigma}+
A^j_{[\nu} \nabla_{\mu]} \Sigma^k_{\rho\sigma})= \\ \nonumber
- 2\epsilon^{ijk} (\nabla_{[\mu} A^j_{\nu]} \Sigma^k_{\rho\sigma}+
A^j_{[\mu} \epsilon^{klm} A^l_{\nu]} \Sigma^l_{\rho\sigma}) = - \epsilon^{ijk} F^j_{\mu\nu} \Sigma^k_{\rho\sigma},
\ee
where
\be\label{F-curv}
F^i_{\mu\nu} := 2\partial_{[\mu} A^i_{\nu]} +\epsilon^{ijk} A^j_\mu A^k_\nu
\ee
is the curvature of the connection $A^i_\mu$. Importantly, we observe that, in the case of ${\rm SU}(2)$ structures in dimension four, the intrinsic torsion assembles itself into an $\mathfrak{su}(2)$-valued one-form, or an ${\rm SO}(3)$ connection. This connection gives rise to its curvature 2-form (\ref{F-curv}). 

The left-hand side in (\ref{bianchi}) is just the projection of the Riemann tensor that is valued in the symmetric second power $\Lambda^2(M)\otimes_S\Lambda^2(M)$ onto $E$ with respect to the second pair of indices. So, there is no loss of information if we multiply both sides of (\ref{bianchi}) with $\epsilon^{ijk}\Sigma^{j\rho\sigma}$ to get
\be\label{Riemann-F}
 R_{\mu\nu}{}^{\rho\sigma} \Sigma^k_{\rho\sigma} = 2F^k_{\mu\nu}.
\ee
This is the most useful form of the "Bianchi identity" (\ref{bianchi}), using the terminology of \cite{Spiro-G2}. In words, the self-dual part of the Riemann curvature $R_{\mu\nu\rho\sigma}$ with respect to the pair of indices $\rho\sigma$ equals a multiple of the curvature tensor $F^i_{\mu\nu}$, which is also the curvature of the intrinsic torsion $A^i_\mu$. 

\subsection{Ricci tensor} 

We can extract the Ricci tensor from (\ref{Riemann-F}) via
\be
\Sigma^i_{\mu}{}^\alpha R_{\alpha\nu \rho\sigma} \Sigma^i_{\rho\sigma} = ( g_{\mu\rho} g^\alpha{}_{\sigma} - g_{\mu\sigma} g^\alpha{}_{\rho}+\epsilon_\mu{}^\alpha{}_{\rho\sigma}) R_{\alpha\nu \rho\sigma} = -2 R_{\mu\nu}, 
\ee
where we used (\ref{sigma-sigma}). On the other hand, applying this to the right-hand side of (\ref{Riemann-F}) we get
\be
R_{\mu\nu} = -\Sigma^i_{\mu}{}^\alpha F^i_{\alpha\nu}.
\ee
Thus, in particular, 
\be
s = \Sigma^{i\mu\nu} F^i_{\mu\nu}.
\ee
The inverse of the formula for the Ricci curvature is
\be
F^i_{\mu\nu} = \Psi^{ij} \Sigma^j_{\mu\nu} - \frac{s}{6} \Sigma^i_{\mu\nu} + R_{[\mu}{}^\alpha \Sigma^i_{|\alpha|\nu]}.
\ee
Here $\Psi^{ij}$ is the matrix of components of the chiral half of the Weyl curvature. Using $R_{\mu\nu} = \tilde{R}_{\mu\nu} + \frac{1}{4} s g_{\mu\nu}$, where $\tilde{R}_{\mu\nu}$ is the tracefree part of the Ricci curvature, we can also rewrite this as 
\be\label{F-in-terms-curvature}
F^i_{\mu\nu} = \Psi^{ij} \Sigma^j_{\mu\nu} + \frac{s}{12} \Sigma^i_{\mu\nu} + \tilde{R}_{[\mu}{}^\alpha \Sigma^i_{|\alpha|\nu]}.
\ee
The first two terms here are self-dual as 2-forms, the last is anti-self-dual. This proves the Proposition \ref{prop:curv} of the Introduction. 

\subsection{Einstein condition}

As is well-known, the Riemann curvature viewed as a symmetric endomorphism of $\Lambda^2(M)$, decomposed into its self-dual and anti-self-dual blocks, reproduces the decomposition into Ricci and Weyl parts of the curvature. This is most usefully captured by the following matrix representation
\be\label{riemann}
\text{Riemann} = \left( \begin{array}{cc} W^+ + R & Rc^0 \\ Rc^0 & W^- + R \end{array}\right).
\ee
Here $W^\pm$ are the two chiral halves of the Weyl curvature, and $Rc^0$ is the tracefree part of the Ricci tensor. The trace part is denoted by $R$ and is the scalar curvature. The first row of this matrix is the self-dual part of Riemann with respect to the second pair of indices, and the second row is the anti-self-dual part. Similarly, the first (second) column is the self-dual (anti-self-dual) part of Reimann with respect to the first pair of indices. We thus see that the curvature $F^i_{\mu\nu}$ of the intrinsic torsion encodes precisely the first row of the matrix (\ref{riemann}), and thus the self-dual part $W^+$ if the Weyl curvature, as well as all of the Ricci curvature. 

It is now clear that the Einstein condition can be encoded as one on the curvature $F^i_{\mu\nu}$. The condition that $F^i_{\mu\nu}$ is self-dual as a 2-form is equivalent to the condition that the tracefree part $Rc^0$ of Ricci vanishes
\be
F^i \in \Lambda^+ \Leftrightarrow Rc^0=0.
\ee
The scalar curvature can then be set to any desired value by imposing a condition on the self-dual part of $F^i$. All in all, Einstein equations are most usefully stated as the condition
\be\label{einstein}
F^i_{\mu\nu} = \left( \Psi^{ij} + \frac{\Lambda}{3} \delta^{ij}\right) \Sigma^j_{\mu\nu}.
\ee
Here $\Psi^{ij}$ is an arbitrary symmetric tracefree $3\times 3$ matrix, which encodes the $W^+$ part of the curvature, and is not constrained by the Einstein equations. The constant $\Lambda=4s$ is a multiple of the scalar curvature $s$. The equation (\ref{einstein}) is equivalent to $R_{\mu\nu}=\Lambda g_{\mu\nu}$ Einstein condition. 

\section{Linearised analysis}
\label{sec:linearised}

The purpose of this section is to consider perturbations of ${\rm SU}(2)$ structures, around the flat space $\R^4$, and construct the most general diffeomorphism invariant Lagrangian for such perturbations. This linearised story provides a very good intuition for the non-linear story in the next section. In this section $\Sigma^i$ is chosen to be the canonical basis of self-dual 2-forms for the flat space $\R^4$. In particular, all partial derivatives of the objects $\Sigma^i_{\mu\nu}$ vanish. 

\subsection{Perturbation of a ${\rm SU}(2)$ structure} 

The tangent space to the ${\rm GL}(4)$ orbit of $\Sigma^i$ contains irreducible representations $(\Lambda^2\otimes E)_{3+1+9}$. We can parametrise these spaces as
\be
(\Lambda^2\otimes E)_{1+9} \ni 2h_{[\mu}{}^\alpha \Sigma_{|\alpha|\nu]}^i, \qquad (\Lambda^2\otimes E)_{3} \ni 2 \epsilon^{ijk} \Sigma^j_{\mu\nu}\xi^k,
\ee
with $h_{\mu\nu}$ being a symmetric tensor and $\xi^i\in E$. The role of the numerical factors chosen is to simplify some formulas that follow. This means that perturbations of $\Sigma^i_{\mu\nu}$, which we denote by $\delta \Sigma^i_{\mu\nu}:=\sigma^i_{\mu\nu}$ can be parametrised as
\be
\sigma^i_{\mu\nu} = 2h_{[\mu}{}^\alpha \Sigma_{|\alpha|\nu]}^i + 2 \epsilon^{ijk}  \Sigma^j_{\mu\nu} \xi^k.
\ee
The inverse is given by
\be
h_{\mu\nu} = - \frac{1}{2} \sigma^i_{(\mu}{}^\alpha \Sigma_{|\alpha|\nu)}^i - \frac{1}{12} \eta_{\mu\nu} \Sigma^{i\rho\sigma} \sigma^i_{\rho\sigma}, \\ \nonumber
\xi^i = -\frac{1}{16}\epsilon^{ijk} \Sigma^{j\mu\nu} \sigma^k_{\mu\nu}.
\ee
Here $\eta_{\mu\nu}$ is the standard (flat) metric on $\R^4$. 

\subsection{Transformation properties under diffeomorphisms}

Let us consider a background of a constant triple of 2-forms $\Sigma^i$. The diffeomorphisms act $\delta_X \Sigma^i = {\mathcal L}_X \Sigma^i = i_X d\Sigma^i + d i_X \Sigma^i$. In the case of a constant triple of 2-forms we get $\delta_X \sigma^i = d i_X \Sigma^i$. In index notation
\be
\delta_X \sigma^i_{\mu\nu} = 2 \partial_{[\mu} X^\alpha \Sigma^i_{|\alpha| \nu]}.
\ee
This means that 
\be\label{diffeo-lin}
\delta_X h_{\mu\nu} = \partial_{(\mu} X_{\nu)}, \qquad \delta_X \xi^i = \frac{1}{4} \Sigma^{i\mu\nu} \partial_\mu X_\nu.
\ee

Our index notation conventions for differential forms are captured by 
\be
\omega = \frac{1}{k!} \omega_{\mu_1 \ldots \mu_k} dx^{\mu_1} \ldots dx^{\mu_k}.
\ee
Here $\omega$ is a k-form. This means that the exterior derivative operator acts as
\be
d\omega = \frac{1}{k!} \partial_{\mu} \omega_{\mu_1 \ldots \mu_k} dx^\mu dx^{\mu_1} \ldots dx^{\mu_k},
\ee
so that
\be
(d\omega)_{\mu_1 \ldots \mu_{k+1}} = (k+1) \partial_{[\mu} \omega_{\mu_1 \ldots \mu_k]},
\ee
where the $[\cdot]$ denotes anti-symmetrisation.

\subsection{Transformation properties under ${\rm SO}(3)$}

In addition to diffeomorphisms, we can also consider how quantities transform under the ${\rm SO}(3)$ transformations that rotate $\Sigma^i$. The infinitesimal version of these transformations is
\be
\delta_\phi \sigma^i_{\mu\nu} = 2 \epsilon^{ijk}  \Sigma^j_{\mu\nu}\phi^k,
\ee
where $\phi^i \in \Gamma(E)$. Under these transformations
\be\label{su2-action}
\delta_\phi h_{\mu\nu} = 0, \qquad \delta_\phi \xi^i = \phi^i.
\ee

\subsection{Second order action functional}

We now determine the most general diffeomorphism invariant action functional that can be written in terms of fields $h_{\mu\nu}$ and $\xi^i$, subject to the transformation properties (\ref{diffeo-lin}). We first write the general linear combination of all possible terms. The types of terms are dictated by simple representation theory. First, we can write the most general linear combination of terms that can be constructed solely from $h_{\mu\nu}$. This is standard and independent of the dimension. 
\begin{proposition}
Let $h_{\mu\nu}\in {\rm Sym}^2(T^*M)$ be a symmetric rank two tensor. The most general second order in derivatives of $h_{\mu\nu}$ Lagrangian is given by
\be\label{lin-h-Lagr}
\frac{\rho}{2} (\partial_\mu h_{\nu\rho})^2 + \frac{\alpha}{2} (\partial_\mu h)^2 - \beta h \partial^\mu \partial^\nu h_{\mu\nu} - \gamma (\partial^\mu h_{\mu\nu})^2,
\ee
where $\alpha,\beta,\gamma,\rho$ are arbitrary (real) parameters and $h=\eta^{\mu\nu} h_{\mu\nu}$. 
\end{proposition}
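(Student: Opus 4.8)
The plan is to classify, modulo total derivatives (equivalently, to classify the corresponding actions), all scalars on $\R^4$ that are bilinear in $h_{\mu\nu}$, contain exactly two partial derivatives, and are formed by contracting indices with the flat metric $\eta$. First I would observe that, after one integration by parts, any such monomial can be brought to a form in which each of the two factors of $h$ carries exactly one derivative: a term of the schematic shape $h\cdot\partial\partial h$ becomes $\partial h\cdot\partial h$ up to a total derivative. It then suffices to enumerate the contractions of the six indices of $\partial_\alpha h_{\mu\nu}$ and $\partial_\beta h_{\rho\sigma}$, keeping in mind the symmetry of each $h$; this is a short finite check and produces the five candidates
\[
(\partial_\alpha h_{\mu\nu})^2,\qquad (\partial_\mu h)^2,\qquad (\partial^\mu h_{\mu\nu})^2,\qquad (\partial^\mu h_{\mu\nu})(\partial^\nu h),\qquad (\partial_\rho h_{\mu\nu})(\partial^\mu h^{\rho\nu}).
\]
A single further integration by parts shows $(\partial_\rho h_{\mu\nu})(\partial^\mu h^{\rho\nu})=(\partial^\mu h_{\mu\nu})^2$ and $(\partial^\mu h_{\mu\nu})(\partial^\nu h)=-\,h\,\partial^\mu\partial^\nu h_{\mu\nu}$ modulo total derivatives, which collapses the list to the four terms in (\ref{lin-h-Lagr}); the particular signs and numerical prefactors written there are an arbitrary normalisation absorbed into $\rho,\alpha,\beta,\gamma$.

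What remains is to show these four terms are genuinely independent, i.e. that no further integration-by-parts identity relates them. The cleanest route is to pass to momentum space: integrating by parts once more, any such Lagrangian is equivalent to $\int \hat h_{\mu\nu}(-p)\,\mathcal O^{\mu\nu\rho\sigma}(p)\,\hat h_{\rho\sigma}(p)$, and modulo total derivatives it is determined by its symbol $\mathcal O^{\mu\nu\rho\sigma}(p)$, a tensor polynomial homogeneous of degree two in $p$, built from $\eta$ and $p$, and symmetric in $\mu\nu$, in $\rho\sigma$, and under $(\mu\nu)\leftrightarrow(\rho\sigma)$. Listing such tensors is a finite linear-algebra problem: either the two momenta are contracted together, giving $p^2\eta^{\mu\nu}\eta^{\rho\sigma}$ or $p^2(\eta^{\mu\rho}\eta^{\nu\sigma}+\eta^{\mu\sigma}\eta^{\nu\rho})$, or they sit on free indices, with both on the same $h$, giving $\eta^{\mu\nu}p^\rho p^\sigma+\eta^{\rho\sigma}p^\mu p^\nu$, or one on each, giving $p^\mu p^\rho\eta^{\nu\sigma}+p^\mu p^\sigma\eta^{\nu\rho}+p^\nu p^\rho\eta^{\mu\sigma}+p^\nu p^\sigma\eta^{\mu\rho}$. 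These four structures are manifestly linearly independent, and they are exactly the symbols of the four terms in (\ref{lin-h-Lagr}); hence the space of admissible Lagrangians modulo total derivatives is four-dimensional with the stated basis.

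The main obstacle is precisely this last point: ensuring the classification is exhaustive \emph{and} that the surviving four terms carry no hidden relation. The momentum-space symbol argument disposes of both at once, since it replaces the bookkeeping of integrations by parts by the closed finite list of degree-two invariant tensors; alternatively, linear independence can be checked directly by evaluating the four terms on a few plane waves $h_{\mu\nu}=\epsilon_{\mu\nu}e^{ip\cdot x}$ with suitably chosen polarisations and momenta. Finally, one should remark that allowing the totally antisymmetric $\epsilon^{\mu\nu\rho\sigma}$ as an additional contraction tensor adds nothing new, since the only scalar one can form this way, $\epsilon^{\mu\nu\rho\sigma}\partial_\mu h_{\nu\alpha}\partial_\rho h_\sigma{}^\alpha$, is itself a total derivative.
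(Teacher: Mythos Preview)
Your argument is correct and complete. The approach, however, is genuinely different from the paper's. The paper proceeds by ${\rm Spin}(4)={\rm SU}(2)\times{\rm SU}(2)$ representation theory: it decomposes $h_{\mu\nu}\in S_+^2\otimes S_-^2\oplus\R$ and $\partial_\mu\partial_\nu\in S_+^2\otimes S_-^2\oplus\R$, then counts the occurrences of $S_+^2\otimes S_-^2$ and $\R$ inside ${\rm Sym}^2(S_+^2\otimes S_-^2\oplus\R)$, finding exactly four, and matches each to one of the displayed terms. Your route is instead purely tensorial: you enumerate index contractions of $\partial h\cdot\partial h$, reduce five candidates to four via integration by parts, and then establish independence by classifying the quadratic symbols $\mathcal O^{\mu\nu\rho\sigma}(p)$ built from $\eta$ and $p$. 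The representation-theoretic argument is more in keeping with the paper's overall methodology and explains \emph{why} the count is four without writing down candidates; your direct enumeration is more elementary, dimension-independent in spirit, and in fact more explicit about linear independence, which the paper's proof leaves somewhat implicit. Your closing remark ruling out $\epsilon^{\mu\nu\rho\sigma}$ contractions is a point the paper does not address.
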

\begin{proof} This proposition holds in any dimension, but for our purposes it is sufficient to prove it in four dimensions, where we can decompose all arising representations into symmetric powers of the fundamental representations $S_\pm$ of the two ${\rm SU}(2)$'s in ${\rm Spin}(4)={\rm SU}(2)\times{\rm SU}(2)$. We have
\be
h_{\mu\nu} \in {\rm Sym}^2( S_+\otimes S_-) = S_+^2\otimes S_-^2 \oplus \R.
\ee
We are interested in determining the Lagrangian modulo boundary terms that arise by integration by parts, and are assumed to vanish. This means that we need to consider two instances of the partial derivative $\partial_\mu \partial_\nu$ as in object in 
\be\label{partial2}
\partial_\mu \partial_\nu \in {\rm Sym}^2( S_+\otimes S_-)= S_+^2\otimes S_-^2 \oplus \R.
\ee
We now take the product of two copies of $h_{\mu\nu}$. This is in the space
\be\label{h2}
{\rm Sym}^2( S_+^2\otimes S_-^2 \oplus C^\infty(M)) = {\rm Sym}^2( S_+^2\otimes S_-^2) \oplus S_+^2\otimes S_-^2 \oplus \R.
\ee
The Lagrangian will contract two copies of $h_{\mu\nu}$ with two partial derivatives $\partial_\mu \partial_\nu$ to obtain a scalar. We are thus interested in the factors in (\ref{h2}) that are in the two spaces appearing in (\ref{partial2}). The decomposition 
\be\label{sym2-h}
{\rm Sym}^2( S_+^2\otimes S_-^2) = S_+^4 \otimes S_-^4 \oplus S_+^4\oplus S_-^4 \oplus S_+^2 \otimes S_-^2 \oplus \R
\ee
contains exactly two such factors. Together with the two more such factors in (\ref{h2}) this shows that there are four different possible terms in the Lagrangian we are trying to construct. It is not difficult to see that they are precisely the terms we have written in (\ref{lin-h-Lagr}). Indeed, the first two terms in (\ref{lin-h-Lagr}) contain the derivative contracted with itself, and thus corresponds to the $\R$ factor in (\ref{partial2}). There are two $\R$ factors in (\ref{h2}), one given by $h^2$, the other $(h_{\mu\nu})^2$. The third term in (\ref{lin-h-Lagr}) corresponds to the $S_+^2\otimes S_-^2$ factor in (\ref{partial2}) contracting with the $S_+^2\otimes S_-^2$ second factor in the right-hand-side of (\ref{h2}). The last term corresponds to $S_+^2\otimes S_-^2$ factor in (\ref{partial2}) contracting with the $S_+^2\otimes S_-^2$ factor in (\ref{sym2-h}). 
\end{proof}

We then need to determine all possible terms involving two copies of $\xi^i$, as well as $h \xi$ terms. The field $h_{\mu\nu}$ lives in $S_+^2\otimes S_-^2$ as well as $\R$. The field $\xi^i$ is in $S_+^2$. We have the following tensor products
\be
(S_+^2\otimes S_-^2) \otimes S_+^2 = (S_+^4\otimes S_-^2) \oplus (S_+^2\otimes S_-^2) \oplus S_-^2, \\ \nonumber
{\rm Sym}^2 (S_+^2)   = S_+^4 \oplus \R.
\ee
We need to combine these irreducible pieces with those arising from the symmetrised product of two partial derivatives (\ref{partial2}). This makes it clear that the only term that can be constructed from two copies of $\xi^i$ is $(\partial_\mu \xi^i)^2$. There is also just a single term that can be constructed from $h_{\mu\nu}$ and $\xi^i$, which is 
\be
(\partial^\mu h_{\mu\nu}) (\partial^\alpha \xi^i ) \Sigma^i_{\alpha}{}^\nu.
\ee
This gives the following most general Lagrangian
\be\nonumber
{\mathcal L} = \frac{\rho}{2} (\partial_\mu h_{\nu\rho})^2 + \frac{\alpha}{2} (\partial_\mu h)^2 - \beta h \partial^\mu \partial^\nu h_{\mu\nu} - \gamma (\partial^\mu h_{\mu\nu})^2+ \frac{\lambda}{2} (\partial_\mu \xi^i)^2 + \mu (\partial^\mu h_{\mu\nu}) (\partial^\alpha \xi^i ) \Sigma^i_{\alpha}{}^\nu.
\ee

\subsection{Diffeomorphism invariant Lagrangian}

We now consider the effect of diffeomorphisms, and prove the following
\begin{proposition}
The most general diffeomorphism invariant that is second order in derivatives is given by
\be\label{L-diff-inv}
{\mathcal L} = \rho {\mathcal L}_{GR} + \mu {\mathcal L}',
\ee
with
\be
{\mathcal L}_{GR} = \frac{1}{2} (\partial_\mu h_{\nu\rho})^2 - \frac{1}{2} (\partial_\mu h)^2 -  h \partial^\mu \partial^\nu h_{\mu\nu} - (\partial^\mu h_{\mu\nu})^2, \\ \nonumber
{\mathcal L}'=  -\frac{1}{4} (\partial_\mu h)^2 - \frac{1}{2} h \partial^\mu \partial^\nu h_{\mu\nu} - \frac{1}{4} (\partial^\mu h_{\mu\nu})^2-  (\partial_\mu \xi^i)^2 + (\partial^\mu h_{\mu\nu}) (\partial^\alpha \xi^i ) \Sigma^i_{\alpha}{}^\nu.
\ee
\end{proposition}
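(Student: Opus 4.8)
The plan is to impose diffeomorphism invariance directly on the six-parameter family $\mathcal{L}(\rho,\alpha,\beta,\gamma,\lambda,\mu)$ written just above the statement, using the linearised transformation rules (\ref{diffeo-lin}), and to read off the resulting linear constraints on the parameters. The first reduction is that $\delta_X\mathcal{L}$ splits, modulo total derivatives, into two pieces with disjoint field content: a part bilinear in $X$ and $h$ (coming from the four pure-$h$ terms and from varying $\xi$ in the $\mu$-term) and a part bilinear in $X$ and $\xi$ (coming from the $\lambda$-term and from varying $h$ in the $\mu$-term). Since $h$ and $\xi$ are independent, each piece must separately be a total derivative, so the two sectors decouple and can be treated one at a time.

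In the $(X,h)$ sector I would integrate by parts until $\delta_X\mathcal{L}$ takes the form $X_\nu\,G^\nu[h]$ up to a total derivative, with $G^\nu[h]$ linear in $h$ and third order in derivatives. The pure-$h$ part contributes $-X_\nu\,\partial_\mu E^{\mu\nu}$, where $E^{\mu\nu}$ is the linearised Euler--Lagrange operator of the four-parameter pure-$h$ Lagrangian; the $\xi$-variation of the $\mu$-term contributes a multiple of $\Box\partial^\mu h_\mu{}^\nu-\partial^\nu\partial^\rho\partial^\mu h_{\mu\rho}$ once one uses (\ref{sigma-sigma}) to reduce the bilinear $\Sigma^i_\alpha{}^\nu\Sigma^{i\rho\sigma}$ (the $\epsilon$-term in (\ref{sigma-sigma}) dropping out because it is contracted with a symmetric pair of derivatives). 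In four dimensions the only third-order, one-index expressions built from a single $h$ are $\Box\partial^\mu h_\mu{}^\nu$, $\Box\partial^\nu h$ and $\partial^\nu\partial^\rho\partial^\mu h_{\mu\rho}$ (any Levi-Civita contraction vanishes against symmetrised derivatives), so $G^\nu[h]=0$ is equivalent to exactly three equations, which come out to $\gamma-\rho=\mu/4$, $\alpha+\beta=0$, $\gamma-\beta=-\mu/4$.

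In the $(X,\xi)$ sector I would likewise integrate by parts to the form $X_\sigma V^\sigma$ with $V^\sigma:=\Sigma^{i\sigma\rho}\partial_\rho\Box\xi^i$. The decisive simplifications are $\Sigma^{i\rho\sigma}\partial_\rho\partial_\sigma\equiv 0$ and $\Sigma^i_{\alpha\beta}\partial^\alpha\partial^\beta\xi^i\equiv 0$ by antisymmetry of $\Sigma^i$, which annihilate every candidate structure except multiples of $V^\sigma$; the $\lambda$-term and the $h$-variation of the $\mu$-term each then produce a multiple of $V^\sigma$, and requiring cancellation forces the single equation $\lambda+2\mu=0$. Solving the four equations leaves $\rho$ and $\mu$ free, with $(\alpha,\beta,\gamma,\lambda)=(-\rho-\mu/2,\ \rho+\mu/2,\ \rho+\mu/4,\ -2\mu)$, i.e. $(\rho,\alpha,\beta,\gamma,\lambda,\mu)=\rho\,(1,-1,1,1,0,0)+\mu\,(0,-\tfrac12,\tfrac12,\tfrac14,-2,1)$; substituting these two basis vectors back into the general Lagrangian reproduces $\mathcal{L}_{GR}$ and $\mathcal{L}'$ respectively, which is the claim.

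I expect the main obstacle to be bookkeeping rather than anything conceptual: one must be confident that the enumeration of independent tensor structures in each sector is complete, so that the vanishing of $\delta_X\mathcal{L}$ really is equivalent to the four stated equations with none missing and none redundant, and one must carry the $\Sigma$-algebra through several integrations by parts without sign slips, in particular keeping straight the self-duality identity (\ref{sigma-sigma}) and the vanishing of all Levi-Civita contractions against symmetric derivative pairs. As a useful consistency check, setting $\mu=0$ collapses the three $(X,h)$ equations to $\alpha=-\rho$, $\beta=\gamma=\rho$, which is exactly the classical statement that linearised Einstein--Hilbert is, up to scale, the unique diffeomorphism-invariant Lagrangian built from $h$ alone.
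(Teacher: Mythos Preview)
Your proposal is correct and follows essentially the same route as the paper: compute $\delta_X\mathcal{L}$, integrate by parts, and set the coefficients of the independent tensor structures to zero, obtaining precisely the four linear relations $\gamma=\rho+\mu/4$, $\alpha+\beta=0$, $\beta=\rho+\mu/2$, $\lambda=-2\mu$. Your organisation into $(X,h)$ and $(X,\xi)$ sectors and your explicit remarks on why the $\epsilon$-contributions from \eqref{sigma-sigma} drop out are slightly more detailed than the paper's presentation, but the argument is the same.
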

\begin{proof}
We calculate the effect of a diffeomorphism on $\mathcal L$, integrating by parts when necessary, and assuming that the arising boundary terms vanish. We use the symbol $\approx$ to denote equality modulo integration by parts. We have
\be\nonumber
\delta_X {\mathcal L} \approx (\rho-\gamma+\frac{\mu}{4}) \partial^2 (\partial^\mu h_{\mu\nu}) X^\nu - ( \alpha + \beta) \partial^2 h (\partial X) + (-\beta+\gamma + \frac{\mu}{4}) (\partial X) (\partial^\mu\partial^\nu h_{\mu\nu}) \\ \nonumber 
-(  \frac{\lambda}{4} + \frac{\mu}{2}) \partial^2 \xi^i \Sigma^{i\mu\nu} \partial_\mu X_\nu. 
\ee
Equating the coefficients in front of the independent terms to zero, and parametrising the solution by $\rho,\mu$ we have
\be
\beta= - \alpha = \rho+\frac{\mu}{2}, \quad \gamma= \rho + \frac{\mu}{4}, \qquad \lambda = - 2\mu. 
\ee
\end{proof}
We note that this is a very similar story to what happens in the case of ${\rm Spin}(7)$ structures in eight dimensions, see \cite{Krasnov:2024lcl}. In that context, as here, the most general diffeomoprhism invariant Lagrangian is also given by a linear combination of two terms. 

\subsection{Lagrangian that is also ${\rm SO}(3)$ gauge invariant}

The Lagrangian (\ref{L-diff-inv}) is diffeomoprhism invariant (modulo integration by parts). It is also invariant under global (i.e. rigid) ${\rm SO}(3)$ rotations acting on $E$. However, it is clear that it is possible to demand also local ${\rm SO}(3)$ invariance. From (\ref{su2-action}) we see that these transformations act only on $\xi^i$. It is clear that the Lagrangian ${\mathcal L}'$ is not invariant under such local transformations. Therefore, only ${\mathcal L}_{GR}$ is both diffeomorphism and ${\rm SO}(3)$ gauge invariant. It is therefore to be expected that there exists a unique non-linear Lagrangian for $\Sigma^i_{\mu\nu}$, which is second order in derivatives, and both diffeomorphism and ${\rm SO}(3)$ gauge invariant. We can also expect this non-linear action to have critical points that are Einstein metrics. This is exactly what happens, as we shall now verify. 

\section{Action functionals}
\label{sec:actions}

In preparation to the construction of the action, we will first show that the intrinsic torsion is completely determined by the exterior derivative $d\Sigma^i$. 

\subsection{Torsion in terms of $d\Sigma^i$}

In (\ref{torsion}) we have related the torsion $A^i_\mu$ to the covariant derivative $\nabla_\mu \Sigma^i_{\alpha\beta}$ of the 2-forms $\Sigma^i$. We now explain that the knowledge of the exterior derivative is sufficient
\begin{theorem} The intrinsic torsion is determined by the exterior derivatives of the 2-forms $\Sigma^i$. Specifically, we have
\be\label{torsion-d-Sigma}
A= \frac{1}{4}( J_1 - \mathbb{I})({}^* d\Sigma^i),
\ee
where $J_1$ is the operator (\ref{J-Sigma}) and ${}^* d\Sigma^i$ is the Hodge dual of the 3-form $d\Sigma^i$. 
\end{theorem}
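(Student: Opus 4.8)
The plan is to invert the relation (\ref{torsion}) directly, working in index notation and exploiting the self-duality of the $\Sigma^i$. Since the Levi-Civita connection is torsion-free, the exterior derivative of $\Sigma^i$ is the total antisymmetrisation of its covariant derivative, $(d\Sigma^i)_{\mu\alpha\beta}=3\nabla_{[\mu}\Sigma^i_{\alpha\beta]}$; substituting (\ref{torsion}) gives the component form of (\ref{sigma-A}),
\[
(d\Sigma^i)_{\mu\alpha\beta}=-\epsilon^{ijk}\bigl(A^j_\mu\Sigma^k_{\alpha\beta}+A^j_\alpha\Sigma^k_{\beta\mu}+A^j_\beta\Sigma^k_{\mu\alpha}\bigr).
\]
This already shows $d\Sigma^i$ is a fixed linear function of the intrinsic torsion $A$; the task is to write down the inverse linear map explicitly.

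First I would take the Hodge dual of both sides. Writing $({}^*d\Sigma^i)^\nu=\tfrac{1}{3!}\epsilon^{\nu\mu\alpha\beta}(d\Sigma^i)_{\mu\alpha\beta}$ and noting that the three cyclic terms contribute equally after relabelling dummy indices, one reduces the right-hand side to a single term $\propto \epsilon^{ijk}\epsilon^{\nu\mu\alpha\beta}A^j_\mu\Sigma^k_{\alpha\beta}$. The key step is then to eliminate the four-index $\epsilon$ using self-duality of the $\Sigma^k$ (established in the metric theorem): from $\tfrac12\epsilon_{\mu\nu}{}^{\alpha\beta}\Sigma^k_{\alpha\beta}=\Sigma^k_{\mu\nu}$ one gets $\epsilon^{\nu\mu\alpha\beta}\Sigma^k_{\alpha\beta}=2\Sigma^{k\nu\mu}$, and substituting collapses the expression to something of the form $\epsilon^{ijk}\Sigma^j{}_\nu{}^\mu A^k_\mu$, that is, exactly $J_1(A)^i_\nu$ up to a universal numerical factor, where $J_1$ is the operator (\ref{J-Sigma}). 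Hence ${}^*d\Sigma^i$ equals, up to a fixed constant, $J_1(A)^i$.

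Finally I would invert $J_1$. From the identity $J_1^2=2\mathbb{I}+J_1$ proved in Section~\ref{sec:decomp} one has $J_1(J_1-\mathbb{I})=2\mathbb{I}$, so $J_1$ is invertible on $\Lambda^1(M)\otimes E$ with $J_1^{-1}=\tfrac12(J_1-\mathbb{I})$. Composing this with the previous step, and tracking the overall constant through the normalisation conventions of this paper, yields the stated formula $A=\tfrac14(J_1-\mathbb{I})({}^*d\Sigma^i)$. As a by-product this also re-establishes that the intrinsic torsion is completely determined by the $d\Sigma^i$, which is the remaining assertion of Theorem~\ref{thm:nabla-sigma}: the projection of (\ref{torsion}) onto $\Lambda^3$ is precisely the relation ${}^*d\Sigma^i=(\text{const})\,J_1(A)^i$, and $J_1$ being invertible means no information about $A$ is lost.

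I expect the only genuine obstacle to be bookkeeping of numerical factors: the final coefficient depends on the precise conventions relating $(d\Sigma)_{\mu\alpha\beta}$ to $\nabla_{[\mu}\Sigma_{\alpha\beta]}$ and to $(A\wedge\Sigma)_{\mu\alpha\beta}$, on the normalisation of the Hodge dual on three-forms in four dimensions, and on the sign in the self-duality identity $\epsilon^{\nu\mu\alpha\beta}\Sigma^k_{\alpha\beta}=2\Sigma^{k\nu\mu}$ (which in turn depends on the orientation $\vol_\Sigma$). Once the self-duality identity is in hand, the structural part---recognising the composite operator acting on $A$ as a multiple of $J_1$ and inverting it via its minimal polynomial---is routine.
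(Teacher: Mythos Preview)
Your proposal is correct and follows essentially the same route as the paper: antisymmetrise (\ref{torsion}) to obtain (\ref{compat}), take the Hodge dual and use self-duality of the $\Sigma^k$ to recognise the right-hand side as $2J_1(A)$, then invert $J_1$ via its minimal polynomial $J_1^2=2\mathbb{I}+J_1$ to get $J_1^{-1}=\tfrac12(J_1-\mathbb{I})$. The paper's proof is precisely this, written slightly more tersely and with the constant ${}^*d\Sigma^i=2J_1(A)$ made explicit.
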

\begin{proof} We project the equation (\ref{torsion}) to the space of 3-forms, anti-symmetrising over all 3 indices. We have
\be
\partial_{[\nu} \Sigma^i_{\alpha\beta]} = - \epsilon^{ijk} A^j_{[\nu} \Sigma^k_{\alpha\beta]}.
\ee
We can write this in index-free differential form notations as
\be\label{compat}
d\Sigma^i + \epsilon^{ijk} A^j \Sigma^k=0.
\ee
To solve this, we multiply with the $\epsilon$ tensor and use the self-duality of $\Sigma^i_{\mu\nu}$
\be
\epsilon^{\mu\nu\alpha\beta}\partial_{\nu} \Sigma^i_{\alpha\beta}  = - 2 \epsilon^{ijk} A^j_{\nu} \Sigma^{k\mu\nu} =  2(J_1(A))^{i\mu},
\ee
where $J_1$ is the operator on $\Lambda^1(M)\otimes E$ that was introduced in (\ref{J-Sigma}). We can write this in an index-free way as
\be
{}^* d\Sigma^i =  2J_1(A).
\ee
The $J_1$ operator is invertible, with the inverse given by
\be
J_1^{-1} =\frac{1}{2}( J_1 - \mathbb{I}).
\ee
This establishes (\ref{torsion-d-Sigma}). This theorem is also proven in \cite{Fowdar}, see formulas (26)-(28), but our answer is notably more compact. 
\end{proof}

We have an immediate well-known corollary.
\begin{corollary} An ${\rm SU}(2)$-structure is integrable if and only if $d\Sigma^i=0$. 
\end{corollary}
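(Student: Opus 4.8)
The plan is to read this off directly from the explicit formula (\ref{torsion-d-Sigma}) for the intrinsic torsion, together with the fact that the operator $J_1$ of (\ref{J-Sigma}) is invertible. Recall that by Theorem~\ref{thm:nabla-sigma} the intrinsic torsion of an $\mathrm{SU}(2)$-structure is precisely the $\mathrm{SO}(3)$ connection $A^i$ appearing in (\ref{torsion}); so ``integrable'' is to be understood as $A^i\equiv 0$ (equivalently, by the earlier discussion, as the hyper-K\"ahler condition $\nabla\Sigma^i=0$).

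For the forward direction, if $A^i\equiv 0$ then (\ref{torsion}) gives $\nabla_\mu\Sigma^i_{\rho\sigma}=0$; since the Levi-Civita connection is torsion-free, $d\Sigma^i$ is the total antisymmetrisation of $\nabla\Sigma^i$, whence $d\Sigma^i=0$. (Alternatively one may simply set $A=0$ in (\ref{compat}).) For the converse, I would substitute $d\Sigma^i=0$ into (\ref{torsion-d-Sigma}) to obtain $A=\tfrac14(J_1-\mathbb{I})(0)=0$; as $A$ is the intrinsic torsion, the structure is integrable. Conceptually this is the statement that $J_1$ is invertible: the proof behind (\ref{torsion-d-Sigma}) shows ${}^*d\Sigma^i=2J_1(A)$, and since $J_1$ has spectrum $\{2,-1\}$ it is in particular injective, so passing from $\nabla\Sigma^i$ to $d\Sigma^i$ loses no information and $d\Sigma^i=0\iff A=0$.

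There is no genuine computational obstacle here; the statement is an immediate corollary of the theorem establishing (\ref{torsion-d-Sigma}). The only point deserving care is the identification of ``integrable'' with the vanishing of the intrinsic torsion, which by Theorem~\ref{thm:nabla-sigma} is the connection $A^i$ itself, and the observation that (\ref{torsion-d-Sigma}) is a true equivalence because $J_1$ is invertible.
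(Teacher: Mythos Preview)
Your proposal is correct and follows essentially the same approach as the paper: the paper's proof simply observes that integrability means vanishing intrinsic torsion, and that by (\ref{torsion-d-Sigma}) this is equivalent to $d\Sigma^i=0$. You have supplied more detail (the forward direction via (\ref{compat}) and the explicit remark that invertibility of $J_1$ underlies the equivalence), but the argument is the same.
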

Indeed, an ${\rm SU}(2)$-structure is integrable if and only if its intrinsic torsion vanishes, which, in view of (\ref{torsion-d-Sigma}), is clearly equivalent to the conditions $d\Sigma^i=0$.

\subsection{Calculation of $A$}
 
The aim of this subsection is to establish a useful explicit formula for the intrinsic torsion in terms of $d\Sigma^i$. We have
 \be
( *d\Sigma)^i_\mu = \epsilon_{\mu}{}^{\alpha\beta\gamma} \partial_\alpha \Sigma^i_{\beta\gamma}, 
\ee
and
\be
A^i_\mu = \frac{1}{4} (J_1 -\mathbb{I})(*d\Sigma)^i_\mu = -\frac{1}{4} \epsilon_{\mu}{}^{\alpha\beta\gamma} \partial_\alpha \Sigma^i_{\beta\gamma} + \frac{1}{4} \epsilon^{ijk} \Sigma^j_\mu{}^\alpha \epsilon_{\alpha}{}^{\beta\gamma\delta} \partial_\beta \Sigma^k_{\gamma\delta}.
\ee
We can can simplify the last term using
\be\label{epsilon-Sigma}
\epsilon^{\mu\nu\rho\sigma}  \Sigma^i_{\alpha\sigma}= 3 \delta_\alpha^{[\rho} \Sigma^{i \mu\nu]}.
\ee
This gives
\be\label{A-dS}
A_\mu^i  =  -\frac{1}{4} \epsilon_{\mu}{}^{\alpha\beta\gamma} \partial_\alpha \Sigma^i_{\beta\gamma} - \frac{1}{4} \epsilon^{ijk} \Sigma^{j\alpha\beta} \partial_\mu \Sigma^k_{\alpha\beta} -\frac{1}{2} \epsilon^{ijk} \Sigma^{j\alpha\beta} \partial_\beta \Sigma^k_{\mu\alpha}.
\ee
This gives an explicit formula for the intrinsic torsion of an ${\rm SU}(2)$-structure in terms of derivatives of $\Sigma$'s. This formula is useful if one wants to evaluate a torsion squared explicitly in terms of derivatives of the 2-forms $\Sigma^i$. 

\subsection{Bianchi identity} 

The aim of this subsection is to establish a useful consequence of the equation (\ref{compat}) for the intrinsic torsion. We take the exterior derivative of this equation to get
\be
\epsilon^{ijk} dA^j \Sigma^k - \epsilon^{ijk} A^j d\Sigma^k=0.
\ee
We now substitute $d\Sigma^k$ from (\ref{compat}) as $d\Sigma^k =- \epsilon^{klm} A^l \Sigma^m$. We then use $A^j A^l = (1/2) \epsilon^{jls} \epsilon^{spq} A^p A^q$
to rewrite
\be
\epsilon^{ijk} A^j \epsilon^{klm} A^l \Sigma^m = \epsilon^{ijk} (\frac{1}{2} \epsilon^{jlm} A^l A^m) \Sigma^k.
\ee
All in all, we get
\be\label{bianchi-F}
\epsilon^{ijk} F^j \Sigma^k =0,
\ee
where $F^i$ is the curvature 
\be
F^i = dA^i + \frac{1}{2} \epsilon^{ijk} A^j A^k.
\ee
We note that (\ref{bianchi-F}) can be interpreted as the statement that there is no $S_+^2$ component in the decomposition of the $F\in \Lambda^2(M) \otimes E$ into its irreducible components. 

\subsection{Transformation properties under diffeomorphisms}

The aim of this subsection is to establish how the intrinsic torsion transforms under diffeomorphisms. The ${\rm SU}(2)$ structure transforms under the diffeomoprhisms according to 
$\delta_X \Sigma^i = d i_X \Sigma^i + i_X d\Sigma^i$. The transformation property of the intrinsic torsion that solves (\ref{compat}) can be determined as follows. Taking the variation of (\ref{compat}) we have
\be\label{delta-compat}
d (i_X d\Sigma^i) + \epsilon^{ijk} \delta_X A^j \Sigma^k + \epsilon^{ijk} A^j ( di_X\Sigma^i + i_X d\Sigma^i)=0.
\ee
We can also insert the vector field $X$ into (\ref{compat}) to get
\[
i_X d\Sigma^i + \epsilon^{ijk} (i_X A^j) \Sigma^k - \epsilon^{ijk} A^j i_X \Sigma^k=0.
\]
Substituting $i_X d\Sigma^i$ from here into (\ref{delta-compat}) we have
\be\label{delta-compat-1}
d (\epsilon^{ijk} A^j i_X \Sigma^k-\epsilon^{ijk} (i_X A^j) \Sigma^k) + \epsilon^{ijk} \delta_X A^j \Sigma^k + \epsilon^{ijk} A^j  di_X\Sigma^k  \\ \nonumber
+\epsilon^{ijk} A^j (\epsilon^{klm} A^l i_X \Sigma^m-\epsilon^{klm} (i_X A^l) \Sigma^m)=0.
\ee
The terms in the first line become
\[
\epsilon^{ijk} dA^j i_X \Sigma^k-\epsilon^{ijk} d(i_X A^j) \Sigma^k + \epsilon^{ijk} (i_X A^j) \epsilon^{klm} A^l \Sigma^m + \epsilon^{ijk} \delta_X A^j \Sigma^k ,
\]
where we have used (\ref{compat}) again. 
The first term in the second line can also be simplified. We again use $A^j A^l = (1/2) \epsilon^{jls} \epsilon^{spq} A^p A^q$ to get
\[
\epsilon^{ijk} A^j \epsilon^{klm} A^l i_X \Sigma^m =\epsilon^{ijk} (\frac{1}{2} \epsilon^{jlm} A^l A^m) i_X \Sigma^k.
\]
This means that (\ref{delta-compat-1}) can be rewritten as
\[
\epsilon^{ijk} F^j i_X \Sigma^k + \epsilon^{ijk}( \delta_X A^j -d(i_X A^j) ) \Sigma^k + \epsilon^{ijk} (i_X A^j) \epsilon^{klm} A^l \Sigma^m  
-\epsilon^{ilk} A^l \epsilon^{kjm} (i_X A^j) \Sigma^m=0,
\]
where we changed the names of the dummy indices suggestively.
The last two terms can be simplified using the identity
\be\label{bianchi-epsilon}
\epsilon^{ijk}\epsilon^{klm}+\epsilon^{ilk}\epsilon^{kmj} + \epsilon^{imk}\epsilon^{kjl}=0.
\ee
This gives
\[
\epsilon^{ijk} F^j i_X \Sigma^k + \epsilon^{ijk}( \delta_X A^j -d(i_X A^j) - \epsilon^{jpq} A^p (i_X A^q)) \Sigma^k =0.
\]
We can finally insert $X$ into (\ref{bianchi-F}) to rewrite the first term here as $- \epsilon^{ijk} i_X F^j  \Sigma^k$. Overall, this produces terms that are all of the type of operator $J_1$ acting on an $E$-valued 1-form. The operator $J_1$ is invertible, which allows us to write
\be\label{torsion-diffeo}
\delta_X A^i = d(i_X A^i) + \epsilon^{ijk} A^j (i_X A^k) + i_X F^i.
\ee
The first two terms here assemble into the covariant derivative of $i_X A^i$ computed using the connection $A^i$. The last term is the insertion of $X$ into the curvature $F^i$. This is of course as expected, because using the formula for $F^i$ and noting a cancelation this can be rewritten as
\[
\delta_X A^i = d i_X A^i + i_X d A^i.
\]
This confirms that the torsion transforms covariantly under diffeomorphisms, and gives a very useful formula (\ref{torsion-diffeo}). 

\subsection{Transformation properties under ${\rm SO}(3)$ gauge transformations}

Let us also determine how the torsion transforms under the local ${\rm SO}(3)$ gauge transformations $\delta_\phi \Sigma^i = \epsilon^{ijk} \phi^k \Sigma^k$. Taking the variation of (\ref{compat}) we have
\[
d (\epsilon^{ijk} \phi^j \Sigma^k) + \epsilon^{ijk} \delta_\phi A^j \Sigma^k + \epsilon^{ijk} A^j \epsilon^{klm} \phi^l \Sigma^m=0.
\]
The first term gives a contribution containing $d\phi^i$, as well as one with $d\Sigma^i$. The latter can be transformed using (\ref{compat}). This gives
\[
 \epsilon^{ijk} (\delta_\phi A^j + d\phi^j)\Sigma^k + \epsilon^{ijk} A^j \epsilon^{klm} \phi^l \Sigma^m- \epsilon^{ijk} \phi^j \epsilon^{klm} A^l \Sigma^m=0.
 \]
 The last two terms can again be transformed using (\ref{bianchi-epsilon}). This puts all terms in the same form of $J_1$ acting on an $E$-valued 1-form. Because $J_1$ is invertible we get
 \begin{equation}\label{A-gauge-transf}
 \delta_\phi A^i = - d\phi^i - \epsilon^{ijk} A^j \phi^k,
 \end{equation}
 which is the usual gauge transformation with parameter $-\phi^i$. This confirms that the intrinsic torsion $A^i$ is an ${\rm SO}(3)$-connection. 
 This implies that the curvature $F^i$ transforms covariantly
 \[
 \delta_\phi F^i = \epsilon^{ijk} \phi^j F^k.
 \]

\subsection{Diffeomorphism invariant action}

We have confirmed that the torsion transforms covariantly under diffeomorphisms. This means that any action that is schematically of the type $\int A^2$ is diffeomorphism invariant. Now, the representation theoretic decomposition (\ref{lambda-E-4}), (\ref{lambda-E-8}) of $A\in \Lambda^1(M) \otimes E$ shows that there are two irreducible components of the intrinsic torsion. This means that there are only two quadratic invariants that can be constructed from $A$. One can always take as a basis of such invariants the quantities $(A_\mu^i)^2$ and $A_\mu^i J_1(A)^{i\mu}$. It can be confirmed that the linearisation of this general diffeomorphism invariant action coincides with the linearised action (\ref{L-diff-inv}). This establishes that the most general diffeomorphism invariant action for $\Sigma$, which is second order in derivatives and is also invariant under global ${\rm SO}(3)$ rotations of $\Sigma$, is given by a linear combination of $\int(A_\mu^i)^2$ and $\int A_\mu^i J_\Sigma(A)^{i\mu}$. Note that already the requirement of global  ${\rm SO}(3)$-invariance implies a drastic reduction in the number of possible torsion squared terms. Indeed, the reference \cite{Fowdar} shows that, if no requirement of the ${\rm SO}(3)$-invariance is implied, there are 15 such terms. 

\subsection{Diffeomorphism and ${\rm SO}(3)$ invariant action}

Let us now impose the requirement that the action is both diffeomorphism and locally ${\rm SO}(3)$ gauge invariant. At the linearised level, we have seen that this has the effect that only one of the two diffeomorphism invariant terms survives, and one gets linearised Einstein-Hilbert action. It is clear that from the two terms $A^2$ and $A J_1(A)$ the first one is not gauge invariant. Using physics terminology, this term is a mass term for the connection, which cannot be gauge invariant. Let us discuss the other term. We claim that it is both diffeomorphism and ${\rm SO}(3)$ invariant. To see this, it is best to rewrite it using some integration by parts identities. Consider $\int \Sigma^i dA^i$. Integrating by parts we have
\be
\int \Sigma^i dA^i \approx  - \int d\Sigma^i A^i = -\int \epsilon^{ijk} A^i A^j \Sigma^k. 
\ee
In the last equality we have used (\ref{compat}). The quantity on the right-hand side is a multiple of $A J_1(A)$. This means that
\be
\int \Sigma^i F^i = \int \Sigma^i (dA^i + \frac{1}{2} \epsilon^{ijk} A^j A^k) \approx - \frac{1}{2} \int \Sigma^i \epsilon^{ijk} A^j A^k.
\ee
The integrand on the left is built from objects that transform covariantly under local ${\rm SO}(3)$ gauge transformations and is thus invariant. The integral is then both diffeomorphism and gauge invariant. This means that this is also the case for the object on the left-hand side. This establishes that there is unique action for ${\rm SU}(2)$ structures in dimension four that is both diffeomorphism and ${\rm SO}(3)$ gauge invariant. It is of the schematic type torsion squared, and is given by 
\be\label{second-order-action}
S[\Sigma] = - \frac{1}{2}\int_M \Sigma^i \epsilon^{ijk} A^j(\Sigma) A^k(\Sigma),
\ee
where we now indicated that the connection (intrinsic torsion) is determined by $\Sigma$. This is the action described in the Introduction, see (\ref{intr-action}). This functional is of the type of general functionals considered in \cite{Fowdar}, Sec. 3.3. Our discussion above makes it clear that the action (\ref{second-order-action}) is "the best" second order in derivatives action that can be written for ${\rm SU}(2)$ structures. It is the best action because it is the unique action that in addition to diffeomorphism invariance also possesses ${\rm SO}(3)$ gauge invariance. As we shall see below, it is also best in the sense that its critical points are Einstein. One can substitute the expression for $A(\Sigma)$ given by (\ref{A-dS}) to obtain an explicit functional in terms of $\Sigma$. 

\subsection{Plebanski action and Einstein condition}

We can now discuss the Plebanski action, which is a first-order in derivatives version of (\ref{second-order-action}). The idea is to write an action that is a functional of both $\Sigma^i$ and an independent $E$-valued one-form field $A^i$, such that the Euler-Lagrange equations for $A^i$ coincide with (\ref{compat}). A moment of reflection shows that this action is $\int \Sigma^i F^i$. This action is then to be supplemented by the constraint terms that guarantee that $\Sigma^i$ satisfy their algebraic constraints. One is also free to add to this action the volume term with an arbitrary coefficient. This produces the action  known in the literature as the Plebanski action 
\cite{Plebanski:1977zz}. It is given by
\be\label{pleb}
S[\Sigma,A,\Psi]= \int_M \Sigma^i ( dA^i + \frac{1}{2} \epsilon^{ijk} A^j A^k) - \frac{1}{2}\left( \Psi^{ij} + \frac{\Lambda}{3} \delta^{ij}\right) \Sigma^i \Sigma^j.
\ee
Here $\Psi^{ij}$ is an arbitrary traceless symmetric $3\times 3$ matrix, whose components serve as Lagrange multipliers to impose the constraints $\Sigma^i\Sigma^j\sim \delta^{ij}$. Indeed, the variation with respect to the field $\Psi^{ij}$ gives $\Sigma^i\Sigma^j\sim \delta^{ij}$, which are the algebraic conditions that need to be satisfied by an ${\rm SU}(2)$ structure defining 2-forms $\Sigma^i$. It is also easy to see that its Euler-Lagrange equation arising by varying with respect to $A^i$ is precisely (\ref{compat}), and the Euler-Lagrange equation arising by varying with respect to $\Sigma^i$ is precisely (\ref{einstein}).  

This establishes the following 
\begin{theorem} The critical points of (\ref{second-order-action}), or equivalently of (\ref{pleb}) are ${\rm SU}(2)$ structures whose associated metric is Einstein.
\end{theorem}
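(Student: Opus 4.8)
The plan is to read the result off the Euler--Lagrange equations of the first-order action (\ref{pleb}) and then transfer the conclusion to (\ref{second-order-action}). The equivalence of the two variational problems has in effect already been recorded: the $A$-equation of (\ref{pleb}) is (\ref{compat}), which by Theorem~\ref{thm:nabla-sigma} determines $A^i$ uniquely as the intrinsic torsion $A^i(\Sigma)$; substituting this back, and using the identity $\int\Sigma^i\wedge F^i\approx-\tfrac12\int\epsilon^{ijk}\Sigma^i\wedge A^j\wedge A^k$ (a consequence of (\ref{compat})), turns (\ref{pleb}) with $\Lambda=0$ literally into (\ref{second-order-action}), while for general $\Lambda$ one picks up in addition the cosmological term $-\Lambda\int\vol_\Sigma$. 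Since the self-dual part of $F^i(\Sigma)$ equals $\Psi^{ij}\Sigma^j+\tfrac{s}{12}\Sigma^i$ with $\Psi$ tracefree, one also has $\int\Sigma^i\wedge F^i(\Sigma)=\tfrac12\int_M s\,\vol_\Sigma$, so (\ref{second-order-action}) is, up to a constant, the Einstein--Hilbert functional of $g_\Sigma$ and (\ref{pleb}) its cosmological-constant version; this already makes the statement plausible, but we give the argument intrinsically.

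First I would compute the three variations of (\ref{pleb}). Varying the tracefree symmetric multiplier $\Psi^{ij}$ yields $\Sigma^i\wedge\Sigma^j=\tfrac13\delta^{ij}\,\Sigma^k\wedge\Sigma^k$, i.e.\ the orthonormality condition of Definition~\ref{def:su2} (the accompanying orientation condition is open, hence automatic near any critical configuration); thus at a critical point $\Sigma$ is a genuine ${\rm SU}(2)$-structure, carrying the metric $g_\Sigma$ and orientation $\vol_\Sigma$ of (\ref{urbantke}). Varying $A^i$ returns (\ref{compat}), so by Theorem~\ref{thm:nabla-sigma} the field $A^i$ is the intrinsic torsion of $\Sigma$ and $F^i=dA^i+\tfrac12\epsilon^{ijk}A^j\wedge A^k$ is its curvature; in particular Proposition~\ref{prop:curv} and the formula (\ref{F-in-terms-curvature}) apply to $F^i$. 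Varying $\Sigma^i$ itself, which is unconstrained here since the algebraic constraints are carried by $\Psi$, gives $F^i_{\mu\nu}=(\Psi^{ij}+\tfrac\Lambda3\delta^{ij})\Sigma^j_{\mu\nu}$, which is precisely (\ref{einstein}).

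It then remains to combine the three equations. Equation (\ref{einstein}) says that $F^i$ is self-dual as a $2$-form, with its $S_+^4$-part free (absorbed into $\Psi^{ij}$) and its trace part equal to $\tfrac\Lambda3\Sigma^i$. Comparing with (\ref{F-in-terms-curvature}), whose three summands lie respectively in $S_+^4$, in $\R$, and in $(\Lambda^2\otimes E)_9=\Lambda^-_\Sigma\otimes E$, the anti-self-dual term $\tilde R_{[\mu}{}^\alpha\Sigma^i_{|\alpha|\nu]}$ must vanish; since $h_{\mu\nu}\mapsto h_{[\mu}{}^\alpha\Sigma^i_{|\alpha|\nu]}$ is an isomorphism from tracefree symmetric tensors onto $(\Lambda^2\otimes E)_9$, this forces $\tilde R_{\mu\nu}=0$, and matching the $\R$-parts fixes $s$ in terms of $\Lambda$. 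Hence $R_{\mu\nu}=\tfrac14 s\,g_{\mu\nu}$ is Einstein --- this is the equivalence (\ref{einstein})$\Leftrightarrow R_{\mu\nu}=\Lambda g_{\mu\nu}$ already noted. The converse is immediate: for an ${\rm SU}(2)$-structure $\Sigma$ with Einstein metric, the triple $(\Sigma,A(\Sigma),\Psi)$, with $\Psi^{ij}$ the self-dual Weyl matrix of $g_\Sigma$, satisfies all three Euler--Lagrange equations. For the second-order action (\ref{second-order-action}) one repeats only the $\Sigma$-variation, now with $\delta\Sigma$ constrained to the tangent space of the ${\rm SU}(2)$-locus (equivalently, with vanishing $S_+^4$-component), noting that the implicit dependence of $A(\Sigma)$ on $\Sigma$ contributes nothing because $d\Sigma^i+\epsilon^{ijk}A^j\wedge\Sigma^k=0$ by (\ref{compat}); using the wedge-orthogonality of the splitting $\Lambda^2\otimes E=(\Lambda^2\otimes E)_{5\oplus3\oplus1\oplus9}$, stationarity becomes the vanishing of the $(\Lambda^2\otimes E)_{3\oplus1\oplus9}$-component of $F^i$, which --- the $(\Lambda^2\otimes E)_3$-component already vanishing by the Bianchi identity (\ref{bianchi-F}) --- reduces to $\tilde R_{\mu\nu}=0$ and $s=0$, i.e.\ $g_\Sigma$ Ricci-flat, in particular Einstein.

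The only real work is the bookkeeping of these three variations --- keeping track of wedge-product signs and using that $\Psi^{ij}$ is tracefree so the $\Psi$-variation produces a tracefree equation --- together with, in the second-order formulation, the verification that $\delta(A(\Sigma))$ drops out of $\delta S$. There is no analytic difficulty: every structural input --- the decomposition of $\Lambda^2\otimes E$ and its wedge-orthogonality, the Bianchi identity (\ref{bianchi-F}), the curvature formula (\ref{F-in-terms-curvature}), and the equivalence of (\ref{einstein}) with the Einstein condition --- has already been established, so the theorem is essentially a corollary of Sections~\ref{sec:decomp}--\ref{sec:torsion}.
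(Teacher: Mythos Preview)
Your proposal is correct and follows the same route as the paper: read off the three Euler--Lagrange equations of (\ref{pleb}) --- the $\Psi$-variation giving the orthonormality constraint, the $A$-variation giving (\ref{compat}), and the $\Sigma$-variation giving (\ref{einstein}) --- and then invoke the already-established equivalence of (\ref{einstein}) with the Einstein condition via (\ref{F-in-terms-curvature}). The paper's own argument is in fact entirely contained in the paragraph preceding the theorem statement, so your write-up is considerably more detailed than what the paper provides; in particular, your separate treatment of the constrained $\Sigma$-variation in the second-order functional (\ref{second-order-action}), with the observation that $\delta A(\Sigma)$ drops out since $\partial S/\partial A$ vanishes on-shell, and the resulting refinement that (\ref{second-order-action}) alone forces $s=0$ (Ricci-flat) rather than merely Einstein, goes beyond what the paper spells out.
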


\section*{Acknowledgements} KK is grateful to Ilka Agrikola for the invitation to Marburg and discussions on the material presented here, and to Shubham Dwivedi for a discussion.

\end{document}